\journal{TBA}
\begin{document}

\begin{frontmatter}

\title{Analysis of new stabilized hp discontinuous Galerkin methods for elasticity problem\tnoteref{1}}\tnotetext[1]{The
work is supported by the Natural Science Foundation of China(No.
10901047). E-mail: zhihaoge@henu.edu.cn; tel:+86-13663786282, fax:+86-371-23881696.}
\author[a]{Zhihao Ge}
\address[a]{Institute of Applied Mathematics \& School of Mathematics and Statistics, Henan University, Kaifeng 475004, P.R. China}
\author[b]{Xiaogang Zhu}
\address[b]{School of Mathematics and Statistics, Henan University, Kaifeng 475004, P.R. China}

\begin{abstract}
In the paper, we propose three new hp discontinuous Galerkin methods for the elasticity problem and make a comparison of the three numerical methods. And we prove  the optimal order of convergence in energy norm and $L^2$-norm by the superpenalization technique. Finally, we give a numerical example to verify our theoretical results.
\end{abstract}

\begin{keyword}
Discontinuous Galerkin method; elasticity problem; error estimates; superpenalization.

\MSC[2010] 65N30\sep  65Z05\sep 74B10

\end{keyword}

\end{frontmatter}


\thispagestyle{empty}

\newtheorem{thm}{Theorem}[section]
\newtheorem{lma}{Lemma}[section]
\newtheorem{rem}{Remarks}[section]
\newtheorem{assu}{Assumption}[section]
\renewcommand{\theequation}{\arabic{section}.\arabic{equation}}

\section{Introduction}
\setcounter{equation}{0}

Elasticity problem is an important branch of solid mechanics, which describes  the changes of stress, strain, and displacement of the elastic medium by external factors. It is also the foundation of material mechanics, plastic mechanics and some interdisciplinary. Since the elasticity problem is very complicated, so it is a challenge due to the huge computation (see \cite{re2}). Many researchers studied the finite difference methods, finite element methods and some discontinuous Galerkin (DG) methods. From the view of physics, the DG method  is very natural to handle with the elasticity  problem because DG method is locally conservative, stable, high order accurate and easily adaptive. The first DG method was introduced in \cite{[7]W.H.Reed} by Reed and Hill for the hyperbolic equations, and then many DG methods were designed, one can refer \cite{1}. The DG methods for linear elasticity was firstly studied by \cite{5}, afterwards,  a local discontinuous Galerkin (LDG) method for linear elasticity problem is developed in \cite{6, re4}.  Cai and Ye presented a kind of mixed discontinuous finite element method in \cite{7}. Besides, Rivi\`{e}re, Shaw and Wheeler introduced a standard DG scheme for linear elasticity in \cite{8}. Houston and Sch\"{o}tzau gave
an adaptive mixed DG method for nearly incompressible linear elasticity in \cite{re3}. However, the above mentioned methods mainly consider the order of error estimates depending on $h$ for the linear elasticity problem. In the work, we propose absolutely stable hp DG methods for the elasticity problem, which are different from the general DG methods, and we prove the optimal order of convergence in the energy norm and $L^2$-norm by the superpenalization technique.

The remaining parts of this paper are organized as follows. In Section 2, we introduce some notations and the model problem. In Section 3, we derive the new hp DG methods for the elasticity problem and prove the stability of the methods. In Section 4,  we prove  the optimal order of convergence of our methods for the elasticity problem in the energy norm and $L^2$-norm. Finally, we give a numerical example to illustrate the performance of our theoretical results.

\section{Model problem and notations}
\setcounter{equation}{0}
In the paper, we consider the following elasticity problem:
\begin{eqnarray}\label{11}
  -\nabla\cdot \boldsymbol{\sigma} =& \boldsymbol{f}  & \quad \textrm{in $\Omega$}, \nonumber \\
  \boldsymbol{u} =& \boldsymbol{g}_D   & \quad \textrm{on $\mit\Gamma_D$}, \\
  \boldsymbol{\sigma}(\boldsymbol u)\boldsymbol{n} =& \boldsymbol{g}_N  & \quad \textrm{on $\mit\Gamma_N$}, \nonumber
\end{eqnarray}
where $\Omega\subset\mathbb{R}^n$ ( $n=2$ or $3$) is a convex polygonal domain with $\partial\Omega=\mit\Gamma_D\cup\mit\Gamma_N$, and the stress tensor $\boldsymbol{\sigma}(\boldsymbol u)=\lambda\nabla\cdot\boldsymbol{u}\mathcal{I}
  +2\mu\varepsilon(\boldsymbol{u})$, $\mathcal{I}$ is the identity tensor, $\sigma_{ij}=C_{ijkl}(\boldsymbol{x})\varepsilon_{kl}(\boldsymbol{u}) \quad \forall i, j, k, l=1, 2, \ldots, n$, $\boldsymbol{\varepsilon}(\boldsymbol{u})
  =\frac{1}{2}(\nabla\boldsymbol{u}
  +\nabla\boldsymbol{u}^T)$, and $C=(C_{ijkl}(\boldsymbol{x}))_{1\leq i,j,k,l\leq n}$ is a fourth-order tensor satisfying the symmetric property: $C_{ijkl}(\boldsymbol{x})=C_{jikl}(\boldsymbol{x}), C_{ijkl}(\boldsymbol{x})=C_{ijlk}(\boldsymbol{x}),
  C_{ijkl}(\boldsymbol{x})=C_{klij}(\boldsymbol{x})$. $\boldsymbol{f}$ is the external force, and $\boldsymbol{g}_D$ and $\boldsymbol{g}_N$ are the given functions. In the paper, we will omit the argument $\boldsymbol{x}$ in $\boldsymbol{C}$ and take the tensor $\boldsymbol{C}$ to be piecewise constant in $\Omega$.

 Let $\mathcal {T}_h$ be a nondegenerate quasiuniform subdivision of $\Omega$ with elements $K$. And we denote $h_K=
 \textrm{diam}(K)$, $h=\max\{h_K\}_{K\in \mathcal{T}_h}$, $\mit\Gamma=\bigcup\limits_{K\in\mathcal {T}_h}\partial K$ and $\mit\Gamma_{h}=\mit\Gamma \backslash\partial\Omega$, where $\partial K$ is the boundary of element $K$. Also, we let $e$ be the edge (face in 3D) of element $K$, and $\boldsymbol{n}$ be the unit outward vector normal to $\partial\Omega$.

To propose the numerical  methods, we need to introduce the following broken Sobolev spaces:
\begin{eqnarray*}
  H^s(\mathcal {T}_h) &=& \{v \in L^2(\Omega): v|_K \in H^s(K),\quad \forall K\in \mathcal {T}_h\}, \label{eq3}\\
  \boldsymbol{H}^s(\mathcal {T}_h) &=& \{\boldsymbol{v}\in (\boldsymbol{L}^2(\Omega))^n: v_i|_K \in H^s(K),\quad   1\leq i\leq n\}\label{eq4}.
\end{eqnarray*}
The norm associated with space $H^s(\mathcal {T}_h)$ is defined by
\begin{equation}\label{5}
  ||v||_{s,h}=\bigg(\sum_{K\in\mathcal {T}_h}||v||^{2}_{s,K}\bigg)^{1/2},
\end{equation}
where $||\cdot||_{s,K}$ is the usual Sobolev norm on element $K$.

The finite element space $V_h\subset\boldsymbol{H}^s(\mathcal {T}_h)$  is given by
\begin{equation}\label{6}
  V_h=\{\boldsymbol{v}: \boldsymbol{v}|_K \in\left(\mathbb{P}_r(K)\right)^n, \quad \forall K\in
  \mathcal {T}_h \},
\end{equation}
where $\mathbb{P}_r(K)$ is a space of polynomial of degree at most $r$ on $K$ for $r\geq1$.

Also, we introduce the average, jump operators and some approximation properties. For each interior
edge $e=\partial K^+\cap\partial K^-$ or boundary edge $e=\partial K^+\cap\partial\Omega$, we define
\begin{displaymath}
  \{\boldsymbol{v}\} := \left\{\begin{array}{ll}
  (\boldsymbol{v}^{+} + \boldsymbol{v}^{-} )/2 \quad  & \textrm{on} \quad \partial K \cap\mit\Gamma_{h},\\
  \boldsymbol{v}^+ \quad & \textrm{on} \quad \partial K\cap\partial\Omega,
  \end{array}\right.
\end{displaymath}
\begin{displaymath}
  [\boldsymbol{v}] := \left\{\begin{array}{ll}
  \boldsymbol{v}^{+} - \boldsymbol{v}^{-} \quad \quad & \textrm{on} \quad \partial K \cap\mit\Gamma_{h},\\
  \boldsymbol{v}^+ \quad & \textrm{on} \quad \partial K\cap\partial\Omega,
  \end{array}\right.
\end{displaymath}
where $\boldsymbol{v}^{\pm}(\boldsymbol{x})=\lim\limits_{\epsilon\rightarrow 0}\boldsymbol{v}(\boldsymbol{x}\pm\epsilon\boldsymbol{n})$.

It is well known that for $\phi\in H^s(K)$ there exists  $z^h_r\in \mathbb{P}_r(K)$  satisfying the
 following properties (cf. \cite{9}):
\begin{eqnarray}
  ||\phi-z^h_r||_{q,K} &\leq& C\frac{h_K^{\mu-q}}{r^{s-q}}||\phi||_{s,K}
  \quad s\geq 0,\label{7}\\
  ||\phi-z^h_r||_{0,e} &\leq& C\frac{h_K^{\mu-\frac{1}{2}}}{r^{s-\frac{1}{2}}}||\phi||_{s,K} \quad s > \frac{1}{2},\label{8}\\
  ||\phi-z^h_r||_{1,e} &\leq& C\frac{h_K^{\mu-\frac{3}{2}}}{r^{s-\frac{3}{2}}}||\phi||_{s,K} \quad s > \frac{3}{2}\label{9},
\end{eqnarray}
where $\mu=\min(r+1,s)$, $r=1,2,\ldots$ and $C$ is a constant depending on $s$ but independent of $\phi$, $h$, $r$.

Define the energy norm as follows:
\begin{equation}\label{nn22}
  |||\boldsymbol{v}|||^2=|||\boldsymbol{v}|||^2_{\mathcal{T}_h}+|||\boldsymbol{v}|||^2_{\partial\mathcal{T}_h},
\end{equation}
where
\begin{eqnarray*}
  |||\boldsymbol{v}|||^2_{\mathcal{T}_h}&=&\sum_{K\in\mathcal{T}_h}\int_K \boldsymbol{\sigma}(\boldsymbol{v}):\boldsymbol{\epsilon}(\boldsymbol{v})dx,\\
  |||\boldsymbol{v}|||^2_{\partial\mathcal{T}_h}&=&\frac{\beta r^2}{h}\sum_{e\in\mit\Gamma_{h}\cup\mit\Gamma_D}
  ||[\boldsymbol{v}]||^2_{0,e}+\frac{\gamma r^2}{h}\sum_{e\in\mit\Gamma_{h}\cup\mit\Gamma_D}
  ||[\boldsymbol{n}\cdot\boldsymbol{v}]||^2_{0,e}
\end{eqnarray*}
and $\beta$ and $\gamma$ are the stabilized parameters.

\section{Stabilized hp DG methods}
\setcounter{equation}{0}

Firstly, we give a variational problem of the problem (\ref{11}) as follows: Find $\boldsymbol{w}\in\boldsymbol{H}^s(\mathcal {T}_h)$ such that
\begin{equation}\label{14}
  B_h(\boldsymbol{w}, \boldsymbol{v})=L(\boldsymbol{v})  \quad  \forall \boldsymbol{v}\in \boldsymbol{H}^s(\mathcal {T}_h),
\end{equation}
where
{\setlength\arraycolsep{2pt}}
\begin{eqnarray}
   B_h(\boldsymbol{w},\boldsymbol{v}) & = & \sum_{K\in\mathcal{T}_h}\int_K\boldsymbol{\sigma}(\boldsymbol{w}):\boldsymbol{\epsilon}(\boldsymbol{v})dx-
  \sum_{e\in\mit\Gamma_{h}\cup\mit\Gamma_D}\int_e\{\boldsymbol{\sigma}(\boldsymbol{w})\boldsymbol{n}\}\cdot
  [\boldsymbol{v}]d\ell  \nonumber \\
   & +& \alpha\sum_{e\in\mit\Gamma_{h}\cup\mit\Gamma_D}\int_e\{\boldsymbol{\sigma}(\boldsymbol{v})\boldsymbol{n}\}
  \cdot[\boldsymbol{w}]d\ell
  +\frac{\beta r^2}{h}\sum_{e\in\mit\Gamma_{h}\cup\mit\Gamma_D}\int_e[\boldsymbol{w}]\cdot[\boldsymbol{v}]d\ell \nonumber \\
  & +&\frac{\gamma r^2}{h}\sum_{e\in\mit\Gamma_{h}\cup\mit\Gamma_D}\int_e[\boldsymbol{n}\cdot\boldsymbol{w}]
  [\boldsymbol{n}\cdot\boldsymbol{v}]d\ell,\label{15}
\end{eqnarray}
and
{\setlength\arraycolsep{2pt}}
\begin{eqnarray} \label{16}
  \boldsymbol{L}(\boldsymbol{v})& = & \int_{\Omega}\boldsymbol{f}\cdot\boldsymbol{v}dx + \int_{\mit\Gamma_N}\boldsymbol{g}_N\cdot\boldsymbol{v}d\ell + \alpha \int_{\mit\Gamma_D}
  \boldsymbol{\sigma}(\boldsymbol{v})\boldsymbol{n}\cdot\boldsymbol{g}_N d\ell  \nonumber \\
  & +&\frac{\beta r^2}{h}\int_{\mit\Gamma_D}\boldsymbol{g}_D\cdot\boldsymbol{v}d\ell
  + \frac{\gamma r^2}{h}\int_{\mit\Gamma_D}(\boldsymbol{n}\cdot\boldsymbol{w})
  (\boldsymbol{n}\cdot\boldsymbol{v})d\ell.
  \end{eqnarray}

As for the variational problem (\ref{14}), we have the following result:
\begin{thm}\label{lma1}
Let $s>\frac{3}{2}$. Suppose that the weak solution $\boldsymbol{u}$ of problem (\ref{11}) belongs to $\boldsymbol{H}^s(\mathcal {T}_h)$, then
$\boldsymbol{u}$ satisfies the variational formulation (\ref{14}). The converse is also valid if $\boldsymbol{u}$ belongs to $\boldsymbol{H}^1(\Omega)\cap\boldsymbol{H}^s(\mathcal {T}_h)$.
\end{thm}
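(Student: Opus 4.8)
The plan is to prove both implications by elementwise integration by parts combined with the standard discontinuous--Galerkin skeleton identity, using the hypothesis $s>\frac{3}{2}$ only to guarantee that $\boldsymbol{\sigma}(\boldsymbol{u})\boldsymbol{n}$ has a well-defined $L^2$ trace on every edge: indeed $s>\frac{3}{2}$ forces $\nabla\boldsymbol{u}|_K\in(H^{s-1}(K))^{n\times n}$ with $s-1>\frac{1}{2}$, so the normal stress admits an edge trace and the average $\{\boldsymbol{\sigma}(\boldsymbol{u})\boldsymbol{n}\}$ appearing in $B_h$ is meaningful. Since $\boldsymbol{u}$ is a weak solution it already lies in $\boldsymbol{H}^1(\Omega)$, so $[\boldsymbol{u}]=0$ on every $e\in\mit\Gamma_{h}$, and because $\nabla\cdot\boldsymbol{\sigma}(\boldsymbol{u})=-\boldsymbol{f}\in\boldsymbol{L}^2(\Omega)$ the tensor $\boldsymbol{\sigma}(\boldsymbol{u})$ lies in $\boldsymbol{H}(\mathrm{div};\Omega)$, whence the normal-stress jumps $[\boldsymbol{\sigma}(\boldsymbol{u})\boldsymbol{n}]$ also vanish on $\mit\Gamma_{h}$.

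For the first implication I start from $-\nabla\cdot\boldsymbol{\sigma}(\boldsymbol{u})=\boldsymbol{f}$ restricted to each $K$, test against an arbitrary $\boldsymbol{v}\in\boldsymbol{H}^s(\mathcal{T}_h)$, and apply Green's formula on $K$; the symmetry of $\boldsymbol{\sigma}$ replaces $\nabla\boldsymbol{v}$ by $\boldsymbol{\epsilon}(\boldsymbol{v})$. Summing over $\mathcal{T}_h$ and regrouping the element-boundary integrals through
\begin{multline*}
\sum_{K\in\mathcal{T}_h}\int_{\partial K}(\boldsymbol{\sigma}(\boldsymbol{u})\boldsymbol{n}_K)\cdot\boldsymbol{v}\,d\ell
=\sum_{e\in\mit\Gamma_{h}}\int_e\big(\{\boldsymbol{\sigma}(\boldsymbol{u})\boldsymbol{n}\}\cdot[\boldsymbol{v}]+[\boldsymbol{\sigma}(\boldsymbol{u})\boldsymbol{n}]\cdot\{\boldsymbol{v}\}\big)\,d\ell \\
+\int_{\partial\Omega}(\boldsymbol{\sigma}(\boldsymbol{u})\boldsymbol{n})\cdot\boldsymbol{v}\,d\ell
\end{multline*}
turns the elementwise weak form into a skeleton identity. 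The interior jumps $[\boldsymbol{\sigma}(\boldsymbol{u})\boldsymbol{n}]$ drop out by the previous paragraph; on $\partial\Omega$ I insert $\boldsymbol{\sigma}(\boldsymbol{u})\boldsymbol{n}=\boldsymbol{g}_N$ over $\mit\Gamma_N$ and use $[\boldsymbol{u}]|_{\mit\Gamma_D}=\boldsymbol{u}|_{\mit\Gamma_D}=\boldsymbol{g}_D$. Adding the symmetrization and penalty terms of $B_h$ --- each of which collapses to its $\mit\Gamma_D$ contribution once $[\boldsymbol{u}]$ is replaced by $\boldsymbol{g}_D$ on $\mit\Gamma_D$ and by $0$ on $\mit\Gamma_{h}$ --- the right-hand side reduces exactly to $L(\boldsymbol{v})$, giving $B_h(\boldsymbol{u},\boldsymbol{v})=L(\boldsymbol{v})$.

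For the converse I reverse this computation. Given $\boldsymbol{u}\in\boldsymbol{H}^1(\Omega)\cap\boldsymbol{H}^s(\mathcal{T}_h)$ with $B_h(\boldsymbol{u},\boldsymbol{v})=L(\boldsymbol{v})$ for all $\boldsymbol{v}$, I integrate the volume term of $B_h$ by parts --- now retaining the interior flux jumps, which are not yet known to vanish --- and cancel the consistency edge integrals against those produced by Green's formula. This yields the residual identity
\begin{multline*}
-\sum_{K\in\mathcal{T}_h}\int_K(\nabla\cdot\boldsymbol{\sigma}(\boldsymbol{u})+\boldsymbol{f})\cdot\boldsymbol{v}\,dx
+\sum_{e\in\mit\Gamma_{h}}\int_e[\boldsymbol{\sigma}(\boldsymbol{u})\boldsymbol{n}]\cdot\{\boldsymbol{v}\}\,d\ell \\
+\int_{\mit\Gamma_N}(\boldsymbol{\sigma}(\boldsymbol{u})\boldsymbol{n}-\boldsymbol{g}_N)\cdot\boldsymbol{v}\,d\ell
+R_D(\boldsymbol{u},\boldsymbol{v})=0,
\end{multline*}
valid for every $\boldsymbol{v}\in\boldsymbol{H}^s(\mathcal{T}_h)$, where $R_D(\boldsymbol{u},\boldsymbol{v})$ gathers the three $\mit\Gamma_D$ terms with $\boldsymbol{u}-\boldsymbol{g}_D$ in place of the data. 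Localizing $\boldsymbol{v}$ to the interior of a single $K$ annihilates all edge terms and forces $-\nabla\cdot\boldsymbol{\sigma}(\boldsymbol{u})=\boldsymbol{f}$ a.e.; next, taking $\boldsymbol{v}$ supported across one interior edge gives $[\boldsymbol{\sigma}(\boldsymbol{u})\boldsymbol{n}]=0$ on $\mit\Gamma_{h}$ (so that the strong equation holds in all of $\Omega$); and taking $\boldsymbol{v}$ whose support meets only $\mit\Gamma_N$ gives $\boldsymbol{\sigma}(\boldsymbol{u})\boldsymbol{n}=\boldsymbol{g}_N$ there.

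The delicate step, which I expect to be the main obstacle, is recovering the Dirichlet condition, because $R_D$ contains the symmetrization term $\alpha\int_{\mit\Gamma_D}\boldsymbol{\sigma}(\boldsymbol{v})\boldsymbol{n}\cdot(\boldsymbol{u}-\boldsymbol{g}_D)\,d\ell$, which couples $\boldsymbol{u}-\boldsymbol{g}_D$ to the stress of the test function rather than to $\boldsymbol{v}$ itself. I resolve this by testing with $\boldsymbol{v}$ supported near $\mit\Gamma_D$ whose co-normal stress vanishes there, $\boldsymbol{\sigma}(\boldsymbol{v})\boldsymbol{n}|_{\mit\Gamma_D}=0$, while the trace $\boldsymbol{v}|_{\mit\Gamma_D}$ is left free; such $\boldsymbol{v}$ are furnished by a standard local lifting, the independence of the Dirichlet and co-normal traces being available for $s>\frac{3}{2}$. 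The $\alpha$-term then drops out and, testing with $\boldsymbol{v}|_{\mit\Gamma_D}=\boldsymbol{u}-\boldsymbol{g}_D$, the sum of the two nonnegative penalty contributions must vanish, which forces $\frac{\beta r^2}{h}\|\boldsymbol{u}-\boldsymbol{g}_D\|_{0,\mit\Gamma_D}^2=0$ and hence $\boldsymbol{u}=\boldsymbol{g}_D$ on $\mit\Gamma_D$. Collecting these four localizations recovers (\ref{11}) in full, and the only nonelementary ingredient throughout is the edge-trace regularity supplied by $s>\frac{3}{2}$, which is exactly what renders every skeleton integral above well defined.
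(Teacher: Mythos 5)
Your proof is correct and follows essentially the same route as the paper's: elementwise integration by parts plus the DG skeleton identity (with $[\boldsymbol{u}]=0$ and the boundary data inserted) for the forward direction, and localization of test functions against the residual identity for the converse. Your write-up is in fact considerably more complete than the paper's, which compresses the entire converse into the phrase ``taking the suitable test functions''; in particular, your observation that $\boldsymbol{\sigma}(\boldsymbol{u})\in\boldsymbol{H}(\mathrm{div};\Omega)$ kills the normal-stress jumps, and your lifting argument recovering the Dirichlet condition (choosing $\boldsymbol{\sigma}(\boldsymbol{v})\boldsymbol{n}=0$ on $\Gamma_D$ to eliminate the $\alpha$-term, then exploiting the nonnegativity of the penalty terms) supply exactly the steps the paper leaves unproved, and they work uniformly in $\alpha$, including the IIPG case $\alpha=0$ where the $\alpha$-term itself could not be used to identify the trace.
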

\begin{proof}
Firstly, we prove that if the solution $\boldsymbol{u}$ of problem (\ref{11}) belongs to $\boldsymbol{H}^s(\Omega)$, then it solves (\ref{14}). To do this, multiplying the first equation of the problem (\ref{11}) by $\boldsymbol{v}\in V_h$ and integrating by parts, we get
\begin{equation}
\int_K\boldsymbol{\sigma}(\boldsymbol{u}):\boldsymbol{\epsilon}(\boldsymbol{v})dx-\int_{\partial
K}\boldsymbol{\sigma}(\boldsymbol{u})\boldsymbol{n}\cdot\boldsymbol{v}d\ell
=\int_K\boldsymbol{f}\cdot\boldsymbol{v}dx.\label{eq160120-1}
\end{equation}
Using (\ref{eq160120-1}), we have
\begin{equation}
  \sum_{K\in\mathcal{T}_h}\int_K\boldsymbol{\sigma}(\boldsymbol{u}):\boldsymbol{\epsilon}(\boldsymbol{v})dx-
  \sum_{e\in\mit\Gamma_{h}}\int_e\{\boldsymbol{\sigma}(\boldsymbol{u})\boldsymbol{n}\}\cdot[\boldsymbol{v}]d\ell=
  \int_{\Omega}\boldsymbol{f}\cdot\boldsymbol{v}dx+\int_{\partial\Omega}
  \boldsymbol{\sigma}(\boldsymbol{u})\boldsymbol{n}\cdot\boldsymbol{v}d\ell.\label{eq130825-1}
\end{equation}
Adding the term $\alpha\int_{\mit\Gamma_D}\boldsymbol{\sigma}(\boldsymbol{v})\boldsymbol{n}\cdot\boldsymbol{u}d\ell$
to both sides of (\ref{eq130825-1}) and using the boundary conditions, we have
\begin{eqnarray}
  \sum_{K\in\mathcal{T}_h}\int_K\boldsymbol{\sigma}(\boldsymbol{u}):\boldsymbol{\epsilon}(\boldsymbol{v})dx -
  \sum_{e\in\mit\Gamma_{h}\cup\mit\Gamma_D}\int_e\{\boldsymbol{\sigma}(\boldsymbol{u})\boldsymbol{n}\}\cdot
  [\boldsymbol{v}]d\ell +\alpha\int_{\mit\Gamma_D}\boldsymbol{\sigma}(\boldsymbol{v})\boldsymbol{n}\cdot
  \boldsymbol{u} d\ell   \\
  = \int_{\Omega}\boldsymbol{f}\cdot\boldsymbol{v}dx+\int_{\Gamma_N}\boldsymbol{g}_N\cdot\boldsymbol{v}d\ell+ \alpha\int_{\mit\Gamma_D}\boldsymbol{\sigma}(\boldsymbol{v})\boldsymbol{n}\cdot
  \boldsymbol{g}_D d\ell.\label{eq160120-2}
\end{eqnarray}
Using (\ref{eq160120-2}) and the fact of   $[\boldsymbol{u}]=0$, we see that (\ref{14}) holds.

Conversely, if $\boldsymbol{u}\in\boldsymbol{H}^1(\Omega)\cap\boldsymbol{H}^s(\mathcal {T}_h)$, then we have
\begin{eqnarray}
  &&\sum_{K\in\mathcal{T}_h}\int_K\boldsymbol{\sigma}(\boldsymbol{u}):\boldsymbol{\epsilon}(\boldsymbol{v})dx
  =-\sum_{K\in\mathcal{T}_h}\int_K \nabla\cdot\boldsymbol{\sigma}(\boldsymbol{u})\cdot\boldsymbol{v}dx \nonumber  \\
    &&
  +\sum_{e\in\mit\Gamma_{h}}\int_{e}\boldsymbol{\sigma}(\boldsymbol{u})\boldsymbol{n}\cdot[\boldsymbol{v}]d\ell
  +\int_{\partial\Omega}\boldsymbol{\sigma}(\boldsymbol{u})\boldsymbol{n}\cdot\boldsymbol{v}d\ell.\label{eq130825-2}
\end{eqnarray}
Using (\ref{eq130825-2}) and (\ref{14}), taking the suitable test functions $\boldsymbol{v}$, we know that $\boldsymbol{u}$ satisfies the problem (\ref{11}). The proof is completed
\end{proof}

Next, we propose the hp discontinuous Galerkin methods for the problem (\ref{11}): Find $\boldsymbol{u}_h\in V_h $ such that
\begin{equation}
  B_h(\boldsymbol{u}_h, \boldsymbol{v}_h)=L(\boldsymbol{v}_h)  \quad  \forall \boldsymbol{v}_h\in V_h,\label{15525-1}
\end{equation}
where $B_h(\cdot, \cdot)$ and $L(\cdot)$ are defined by (\ref{15}) and (\ref{16}), respectively.

\begin{rem}\label{rem1}
If the parameter $\alpha$ of (\ref{15}) is chosen to be $\{-1, 0, 1\}$, the methods are called symmetric interior penalty Galerkin (SIPG) method ($\alpha=-1$) and incomplete interior penalty Galerkin (IIPG) method ($\alpha=0$) and nonsymmetric interior penalty Galerkin (NIPG) method ($\alpha=1$),  respectively. We point out that the above methods are novel because the stabilized parameters are chosen dependently on the mesh size $h$ and the polynomial degree $r$, which are different from the general SIPG, IIPG and NIPG methods.
\end{rem}

\begin{rem}\label{rem2}
Denote $\Sigma_n(\boldsymbol{u}_h)$ by
\begin{displaymath}
\Sigma_n(\boldsymbol{u}_h)=\left\{
\begin{array}{ll}
\{\boldsymbol{\sigma}(\boldsymbol{u}_h)\boldsymbol{n}_K\}-r^2h^{-1}(\beta[\boldsymbol{u}_h]-\gamma [\boldsymbol{n}_K\cdot\boldsymbol{u}_h]\boldsymbol{n}_K)& \textrm{on $\mit\Gamma_{h}$}, \\
\boldsymbol{\sigma}(\boldsymbol{u}_h)\boldsymbol{n}_K- r^2h^{-1}(\beta(\boldsymbol{u}_h-\boldsymbol{g}_D)
-\gamma\boldsymbol{n}_K\cdot(\boldsymbol{u}_h-\boldsymbol{g}_D)\boldsymbol{n}_K)
&\textrm{on $\mit\Gamma_D$}, \\
\boldsymbol{g}_N& \textrm{on $\mit\Gamma_N$},
\end{array}\right.
\end{displaymath}
then we have
\begin{equation}
  \int_{\partial K}\Sigma_n(\boldsymbol{u}_h)+\int_K \boldsymbol{f}=\boldsymbol{0}\label{17}
\end{equation}
for all $K\in\mathcal{T}_h$.  That is, these schemes are local equilibrium in a weak sense for each element $K\in\mathcal{T}_h$.
\end{rem}



Next, we give the stability of our hp DG methods.

\begin{lma}\label{lma2}
For all $(\boldsymbol{w},\boldsymbol{v})\in V_h \times V_h$, then there exists a positive constant $C$ independent of $h$ and $r$ such that
\begin{equation}\label{21}
  B_h(\boldsymbol{w},\boldsymbol{v})\leq C_b|||\boldsymbol{w}||||||\boldsymbol{v}|||.
\end{equation}
\end{lma}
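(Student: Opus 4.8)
The plan is to estimate the five terms of $B_h(\boldsymbol{w},\boldsymbol{v})$ in (\ref{15}) separately and to recombine them via the Cauchy--Schwarz inequality so that each piece is controlled by one factor of $|||\boldsymbol{w}|||$ and one of $|||\boldsymbol{v}|||$; the volume and penalty groups are immediate. For the bulk term $\sum_K\int_K\boldsymbol{\sigma}(\boldsymbol{w}):\boldsymbol{\epsilon}(\boldsymbol{v})\,dx$ I would regard $(\boldsymbol{w},\boldsymbol{v})\mapsto\int_K\boldsymbol{\sigma}(\boldsymbol{w}):\boldsymbol{\epsilon}(\boldsymbol{v})\,dx$ as a symmetric, positive semidefinite bilinear form on each $K$ (using the major symmetry $C_{ijkl}=C_{klij}$ and the ellipticity of $C$, which is implicit in $|||\cdot|||_{\mathcal{T}_h}$ being a norm) and apply its Cauchy--Schwarz inequality, summing over $K$ to obtain the bound $|||\boldsymbol{w}|||_{\mathcal{T}_h}|||\boldsymbol{v}|||_{\mathcal{T}_h}$. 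For the last two penalty terms, edgewise Cauchy--Schwarz followed by Cauchy--Schwarz over the edges turns $\frac{\beta r^2}{h}\sum_e\int_e[\boldsymbol{w}]\cdot[\boldsymbol{v}]$ into $(\frac{\beta r^2}{h}\sum_e\|[\boldsymbol{w}]\|_{0,e}^2)^{1/2}(\frac{\beta r^2}{h}\sum_e\|[\boldsymbol{v}]\|_{0,e}^2)^{1/2}$, and likewise for the $\gamma$ term; both are dominated by $|||\boldsymbol{w}|||_{\partial\mathcal{T}_h}|||\boldsymbol{v}|||_{\partial\mathcal{T}_h}$.

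The crux is the two consistency/flux terms $\sum_e\int_e\{\boldsymbol{\sigma}(\boldsymbol{w})\boldsymbol{n}\}\cdot[\boldsymbol{v}]\,d\ell$ and its $\alpha$-weighted adjoint, since the weighted averages of the stresses are not directly part of the energy norm. Here I would first use Cauchy--Schwarz on each edge to isolate $\|\{\boldsymbol{\sigma}(\boldsymbol{w})\boldsymbol{n}\}\|_{0,e}\,\|[\boldsymbol{v}]\|_{0,e}$. The trace of the stress is then controlled by the $hp$ inverse trace inequality $\|p\|_{0,e}\le C\,r\,h_K^{-1/2}\|p\|_{0,K}$, valid for $p$ of degree at most $r$ and applicable since $\boldsymbol{\sigma}(\boldsymbol{w})\in(\mathbb{P}_{r-1}(K))^{n\times n}$ ($C$ being piecewise constant), which gives $\|\{\boldsymbol{\sigma}(\boldsymbol{w})\boldsymbol{n}\}\|_{0,e}\le C\,r\,h^{-1/2}\|\boldsymbol{\sigma}(\boldsymbol{w})\|_{0,K}$ over the one or two elements abutting $e$. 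The boundedness and ellipticity of $C$ then yield $\|\boldsymbol{\sigma}(\boldsymbol{w})\|_{0,K}^2\le C\int_K\boldsymbol{\sigma}(\boldsymbol{w}):\boldsymbol{\epsilon}(\boldsymbol{w})\,dx$, so that $h^{1/2}r^{-1}\|\{\boldsymbol{\sigma}(\boldsymbol{w})\boldsymbol{n}\}\|_{0,e}$ is bounded by the element contribution to $|||\boldsymbol{w}|||_{\mathcal{T}_h}$.

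With this in hand I would split each product as $\|\{\boldsymbol{\sigma}(\boldsymbol{w})\boldsymbol{n}\}\|_{0,e}\|[\boldsymbol{v}]\|_{0,e}=(h^{1/2}r^{-1}\|\{\boldsymbol{\sigma}(\boldsymbol{w})\boldsymbol{n}\}\|_{0,e})(r\,h^{-1/2}\|[\boldsymbol{v}]\|_{0,e})$, so that the first factor pairs with the bulk energy of $\boldsymbol{w}$ while the second, after squaring, is exactly the $\frac{r^2}{h}$-weighted jump appearing in $|||\boldsymbol{v}|||_{\partial\mathcal{T}_h}$ (up to $\beta^{-1/2}$). A Cauchy--Schwarz over the edges, together with the bounded overlap of the quasiuniform subdivision (each element borders finitely many edges), bounds the flux term by $C|||\boldsymbol{w}|||_{\mathcal{T}_h}|||\boldsymbol{v}|||_{\partial\mathcal{T}_h}\le C|||\boldsymbol{w}||||||\boldsymbol{v}|||$; the $\alpha$-term is identical with $\boldsymbol{w}$ and $\boldsymbol{v}$ interchanged. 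Summing the five estimates yields (\ref{21}) with $C_b$ depending on $\alpha$, on $\beta,\gamma$ (through $\beta^{-1/2},\gamma^{-1/2}$), and on the trace and ellipticity constants, but independent of $h$ and $r$. I expect the only delicate point to be the $r/h^{1/2}$ bookkeeping in this last step: the penalty must scale like $r^2/h$ for the trace-inverse loss to be absorbed exactly, which is precisely the superpenalization built into the energy norm, so that no power of $r$ or $h$ survives in $C_b$.
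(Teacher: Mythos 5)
Your proposal is correct and follows essentially the same route as the paper's proof: the same five-term splitting, Cauchy--Schwarz for the bulk and penalty terms, and the $hp$ inverse trace inequality $\|\cdot\|_{0,e}\le C_0\,r\,h^{-1/2}\|\cdot\|_{0,K}$ combined with the $r\,h^{-1/2}$ factor-splitting to absorb the flux terms into $|||\boldsymbol{w}|||_{\mathcal{T}_h}\,|||\boldsymbol{v}|||_{\partial\mathcal{T}_h}$ (the paper applies the inverse inequality directly to $\boldsymbol{C}^{1/2}\boldsymbol{\epsilon}(\boldsymbol{w})$, with constants $\rho$ and $n_0$ playing the roles of your ellipticity and bounded-overlap constants). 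The only cosmetic difference is that you pass through $\|\boldsymbol{\sigma}(\boldsymbol{w})\|_{0,K}$ and then invoke ellipticity of $\boldsymbol{C}$, which is equivalent.
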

\begin{proof}
For all $(\boldsymbol{w},\boldsymbol{v})\in V_h \times V_h$, we have
\begin{eqnarray}
   B_h(\boldsymbol{w},\boldsymbol{v})  &= & \sum_{K\in\mathcal{T}_h}\int_K\boldsymbol{\sigma}(\boldsymbol{w}):\boldsymbol{\epsilon}(\boldsymbol{v})dx-
   \sum_{e\in\mit\Gamma_{h}\cup\mit\Gamma_D}\int_e\{\boldsymbol{\sigma}(\boldsymbol{w})\boldsymbol{n}\}\cdot
   [\boldsymbol{v}]d\ell \nonumber \\
   &+& \alpha\sum_{e\in\mit\Gamma_{h}\cup\mit\Gamma_D}\int_e\{\boldsymbol{\sigma}(\boldsymbol{v})
   \boldsymbol{n}\}\cdot[\boldsymbol{w}]d\ell+\frac{\beta r^2}{h}\sum_{e\in\mit\Gamma_{h}\cup\mit
  \Gamma_D}\int_e[\boldsymbol{w}]\cdot[\boldsymbol{v}]d\ell\nonumber \\
  &+&\frac{\gamma r^2}{h}\sum_{e\in\mit\Gamma_{h}\cup\mit\Gamma_D}\int_e[\boldsymbol{n}\cdot\boldsymbol{w}]\cdot
  [\boldsymbol{n}\cdot\boldsymbol{v}]d\ell \nonumber\\
  &=& T_1+ T_2+ T_3+ T_4+T_5.\label{eq130825-4}
\end{eqnarray}

For the term $|T_1|$, using the Cauchy-Schwarz inequality, we get
\begin{eqnarray}\label{22}
  |T_1|   &\leq& \sum_{K\in\mathcal{T}_h}
  \bigg(\int_KC_{ijkl}\epsilon_{kl}(\boldsymbol{w})\epsilon_{ij}(\boldsymbol{w})dx\bigg)^{\frac{1}{2}}
   \bigg(\int_KC_{ijkl}\epsilon_{kl}(\boldsymbol{v})\epsilon_{ij}(\boldsymbol{v})dx\bigg)^{\frac{1}{2}}\nonumber \\
  &\leq&|||\boldsymbol{w}||||||\boldsymbol{v}|||.
\end{eqnarray}
In order to bound the terms $|T_2|$ and $|T_3|$, we recall the following inverse estimate (cf. \cite{8, 10, 11})
\begin{equation}\label{23}
  ||\boldsymbol{C}^{1/2}\boldsymbol{\epsilon}(\boldsymbol{v})||_{0,e}\leq C_0 h^{-\frac{1}{2}}r
  ||\boldsymbol{C}^{1/2}\boldsymbol{\epsilon}(\boldsymbol{v})||_{0,K}  \quad \forall \boldsymbol{v}\in
  V_h,
\end{equation}
where $C_0$ is a positive constant independent of $h$ and $r$.

Assume $e\subset\partial K_1\cap\partial K_2$, by Cauchy-Schwarz and triangle inequality, we have
\begin{eqnarray}
  &&\int_e\{C_{ijkl}\epsilon_{kl}n_j\}[v_i]d\ell \leq ||\{C_{ijkl}\epsilon_{kl}n_j\}||_{0,e}||[v_i]||_{0,e} \nonumber\\
  && \leq \frac{1}{2}\left(||C_{ijkl}\epsilon_{kl}n_j|_{K_1}||_{0,e}+||C_{ijkl}\epsilon_{kl}n_j|_{K_2}||_{0,e}\right)||[v_i]||_{0,e} \nonumber\\
  && \leq C_0\rho h^{-1/2}r \left(||\boldsymbol{C}^{1/2}\boldsymbol{\epsilon}(\boldsymbol{v})||_{0,K_1\cup K_2}\right)||[\boldsymbol{v}]||_{0,e},
\end{eqnarray}
where $\rho$ is a positive constant with respect to $\boldsymbol{C}$. Furthermore, summing over all internal on edges, we have
\begin{eqnarray}
  |T_2| &\leq& C_0\rho\sqrt{\frac{n_0}{\beta}}\bigg(\sum_{K\in\mathcal{T}_h}||\boldsymbol{C}^{1/2}\boldsymbol{\epsilon}(\boldsymbol{v})
   ||^2_{0,K}\bigg)^{\frac{1}{2}}\bigg(\frac{\beta r^2}{h}\sum_{e\in\mit\Gamma_h\cup\mit\Gamma_D}||[\boldsymbol{v}]||^2_{0,e}\bigg)^{\frac{1}{2}}\nonumber\\
   &\leq& C_2|||\boldsymbol{w}||||||\boldsymbol{v}|||,
\end{eqnarray}
where the parameter $n_0$ is denoted by the maximum number of neighboring element.

As for the term $T_3$, taking the above argument, we get
\begin{equation}
  |T_3|\leq C_3 |||\boldsymbol{w}||||||\boldsymbol{v}|||.\label{eq130825-5}
\end{equation}

As for the terms $|T_4+T_5|$, we have
\begin{eqnarray}\label{25}
   |T_4+T_5|&\leq&\bigg|\frac{\beta r^2}{h}\sum_{e\in\mit\Gamma_{h}\cup\mit\Gamma_D}\int_e[\boldsymbol{w}]\cdot
   [\boldsymbol{v}]d\ell\bigg|+\bigg|\frac{\gamma r^2}{h}\sum_{e\in\mit\Gamma_{h}\cup\mit\Gamma_D}\int_e
   [\boldsymbol{n}\cdot\boldsymbol{w}][\boldsymbol{n}\cdot\boldsymbol{v}]d\ell\bigg|\nonumber \\
   &\leq& \bigg|\bigg(\frac{\beta r^2}{h}\sum_{e\in\mit\Gamma_{h}\cup\mit\Gamma_D}\int_e
   [\boldsymbol{w}]^2d\ell\bigg)^{\frac{1}{2}}\bigg(\frac{\beta r^2}{h}\sum_{e\in\mit\Gamma_{h}
   \cup\mit\Gamma_D}\int_e[\boldsymbol{v}]^2d \ell\bigg)^{\frac{1}{2}}\bigg|\nonumber \\
   &+& \bigg|\bigg(\frac{\gamma r^2}{h}\sum_{e\in\mit\Gamma_{h}\cup\mit\Gamma_D}\int_e[\boldsymbol{n}
   \cdot\boldsymbol{w}]^2d\ell\bigg)^{\frac{1}{2}}\bigg(\frac{\gamma r^2}{h}\sum_{e\in\mit\Gamma_{h}
   \cup\mit\Gamma_D}\int_e[\boldsymbol{n}\cdot\boldsymbol{v}]^2d\ell\bigg)^{\frac{1}{2}}\bigg|\nonumber \\
   &\leq& C_4|||\boldsymbol{w}||||||\boldsymbol{v}|||.
\end{eqnarray}

Combining with all the bounds together and taking $C=\max\{C_4, C_2, C_3\}$, we see that  (\ref{21}) holds. This completes the proof.
\end{proof}

\begin{lma}\label{lma3}
For all $\boldsymbol{w}\in V_h$, then there exists a positive constant $C_s$ independent of $h$ and $r$ such that
\begin{equation}\label{26}
  B_h(\boldsymbol{w},\boldsymbol{w})\geq C_s |||\boldsymbol{w}|||^{2}.
\end{equation}
\end{lma}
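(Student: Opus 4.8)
The plan is to test the bilinear form against itself, setting $\boldsymbol{v}=\boldsymbol{w}$ in (\ref{15}). Two of the five contributions are already sign-definite and reproduce the energy norm: the volume term equals $|||\boldsymbol{w}|||^2_{\mathcal{T}_h}$ by the very definition of the norm (using $\boldsymbol{\sigma}(\boldsymbol{w}):\boldsymbol{\epsilon}(\boldsymbol{w})=|\boldsymbol{C}^{1/2}\boldsymbol{\epsilon}(\boldsymbol{w})|^2$), while the two penalty terms sum exactly to $|||\boldsymbol{w}|||^2_{\partial\mathcal{T}_h}$. The only piece without a definite sign is the pair of consistency and symmetry face integrals, which on setting $\boldsymbol{v}=\boldsymbol{w}$ collapse into the single cross term, so that
\[
  B_h(\boldsymbol{w},\boldsymbol{w})=|||\boldsymbol{w}|||^2+(\alpha-1)\sum_{e\in\mit\Gamma_{h}\cup\mit\Gamma_D}\int_e\{\boldsymbol{\sigma}(\boldsymbol{w})\boldsymbol{n}\}\cdot[\boldsymbol{w}]d\ell .
\]
Everything then reduces to controlling this cross term.

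For the NIPG choice $\alpha=1$ the cross term vanishes identically, so $B_h(\boldsymbol{w},\boldsymbol{w})=|||\boldsymbol{w}|||^2$ and (\ref{26}) holds with $C_s=1$, with no restriction on $\beta,\gamma,h,r$. For $\alpha\in\{-1,0\}$ I would estimate the cross term exactly as $T_2$ was estimated in the proof of Lemma \ref{lma2}: edgewise Cauchy--Schwarz, the inverse estimate (\ref{23}), and summation over faces (with $n_0$ the maximal number of neighbouring elements) give
\[
  \Big|\sum_{e}\int_e\{\boldsymbol{\sigma}(\boldsymbol{w})\boldsymbol{n}\}\cdot[\boldsymbol{w}]d\ell\Big|\le C_0\rho\sqrt{\tfrac{n_0}{\beta}}\;|||\boldsymbol{w}|||_{\mathcal{T}_h}\Big(\tfrac{\beta r^2}{h}\sum_{e}\|[\boldsymbol{w}]\|_{0,e}^2\Big)^{1/2},
\]
where I have used $\sum_{K}\|\boldsymbol{C}^{1/2}\boldsymbol{\epsilon}(\boldsymbol{w})\|_{0,K}^2=|||\boldsymbol{w}|||^2_{\mathcal{T}_h}$ and noted that the jump factor is dominated by $|||\boldsymbol{w}|||_{\partial\mathcal{T}_h}$. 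The decisive point is that the $rh^{-1/2}$ weight generated by (\ref{23}) is matched exactly by the $r^2/h$ superpenalization weight built into $|||\cdot|||_{\partial\mathcal{T}_h}$, so the resulting constant $M:=|\alpha-1|\,C_0\rho\sqrt{n_0/\beta}$ depends only on the data and on $\beta$, never on $h$ or $r$.

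It then remains to absorb the cross term. Writing $A=|||\boldsymbol{w}|||_{\mathcal{T}_h}$ and $B=\big(\tfrac{\beta r^2}{h}\sum_{e}\|[\boldsymbol{w}]\|_{0,e}^2\big)^{1/2}$, and recalling $A^2+B^2+\tfrac{\gamma r^2}{h}\sum_{e}\|[\boldsymbol{n}\cdot\boldsymbol{w}]\|_{0,e}^2=|||\boldsymbol{w}|||^2$, Young's inequality with a free parameter $\delta>0$ yields
\[
  B_h(\boldsymbol{w},\boldsymbol{w})\ge\Big(1-\tfrac{M\delta}{2}\Big)A^2+\Big(1-\tfrac{M}{2\delta}\Big)B^2+\tfrac{\gamma r^2}{h}\sum_{e}\|[\boldsymbol{n}\cdot\boldsymbol{w}]\|_{0,e}^2 .
\]
Both bracketed coefficients are strictly positive exactly when $\tfrac{M}{2}<\delta<\tfrac{2}{M}$, a non-empty range precisely when $M<2$. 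Since $M=|\alpha-1|C_0\rho\sqrt{n_0/\beta}$, this is guaranteed by taking $\beta$ large enough, namely $\beta>\tfrac14(\alpha-1)^2C_0^2\rho^2 n_0$; choosing $\delta=1$ then gives (\ref{26}) with $C_s=1-\tfrac{M}{2}>0$, independent of $h$ and $r$.

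The main obstacle I anticipate is not the Young's-inequality algebra but verifying the absorption cleanly: confirming that the constant from the inverse estimate is genuinely $h$- and $r$-independent and that the induced threshold on $\beta$ is the admissible one. This is exactly where superpenalization is essential --- had the penalty weight $r^2/h$ been replaced by a weaker one, the $rh^{-1/2}$ factor from (\ref{23}) would leave a residual growing in $r$, and no choice of $\beta$ would restore coercivity. I would therefore state coercivity together with the explicit lower bound on $\beta$ (vacuous when $\alpha=1$), so that the dependence of $C_s$ on the discretization parameters is transparent.
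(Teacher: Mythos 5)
Your proof is correct and follows essentially the same route as the paper's: set $\boldsymbol{v}=\boldsymbol{w}$ so the consistency and symmetry terms collapse into the single cross term with factor $(\alpha-1)$, bound that term by Cauchy--Schwarz plus the inverse estimate (\ref{23}), and absorb it with Young's inequality using a free parameter (your $\delta$ is exactly the paper's $\eta$, and your threshold $\beta>\tfrac14(\alpha-1)^2C_0^2\rho^2 n_0$ is precisely the condition implicit in the paper's requirement that $1-\tfrac{C_0^2n_0\eta|1-\alpha|}{2\beta}>0$ and $1-\tfrac{|1-\alpha|\rho^2}{2\eta}>0$ admit a common $\eta$). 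If anything, your write-up is more explicit than the paper's, since you state the lower bound on $\beta$ as a hypothesis rather than leaving it buried in the choice of the Young parameter.
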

\begin{proof}
Using (\ref{15}) and letting  $\alpha=1$, we have
\begin{equation}
B_h(\boldsymbol{w},\boldsymbol{w})=|||\boldsymbol{w}|||, \forall \boldsymbol{w}\in V_h.\label{28}
\end{equation}
Using the Cauchy-Schwarz inequality, the inverse inequality and Young's inequality, we get
\begin{eqnarray}
&& \sum_{e\in\mit\Gamma_{h}\cup\mit\Gamma_D}\int_e\{C_{ijkl}
    \epsilon_{kl}(\boldsymbol{w})n_j\}[w_i]d\ell\nonumber\\
&&\leq\bigg(\frac{C^2_0n_0}{\beta}\sum_{K\in\mathcal{T}_h}|
|\boldsymbol{C}^{1/2}\boldsymbol{\epsilon}
  (\boldsymbol{w})||^{2}_{0,K}\bigg)^{\frac{1}{2}}\bigg(\frac{\beta r^2\rho^2}{h}
  \sum_{e\in\mit\Gamma_{h}\cup\mit\Gamma_D}
  ||[\boldsymbol{w}]||^2_{0,e}\bigg)^{\frac{1}{2}}\nonumber\\
 && \leq \frac{C^2_0 n_0 \eta}{2\beta}\sum_{K\in\mathcal{T}_h}\int_K\boldsymbol{\sigma}(\boldsymbol{w}):
  \boldsymbol{\epsilon}(\boldsymbol{w})dx
 +\frac{\beta r^2\rho^2}{2\eta h}
  \sum_{e\in\mit\Gamma_{h}\cup\mit\Gamma_D}||[\boldsymbol{w}]||^2_{0,e}.\label{eq130825-6}
\end{eqnarray}
Using (\ref{eq130825-6})  and (\ref{15}), we obtain
\begin{eqnarray}
  &&B_h(\boldsymbol{w},\boldsymbol{w})\geq \bigg(1-\frac{C^2_0 n_0\eta |1-\alpha|}{2\beta}\bigg)
  \sum_{K\in\mathcal{T}_h}\int_K\boldsymbol{\sigma}
  (\boldsymbol{w}):\boldsymbol{\epsilon}(\boldsymbol{w})dx+\\
 && \bigg(1-\frac{|1-\alpha|\rho^2}{2\eta}\bigg)\frac{\beta r^2}{h}
 \sum_{e\in\mit\Gamma_{h}\cup\mit\Gamma_D}||[\boldsymbol{w}]||^2_{0,e}+ \frac{\gamma r^2}{h}
 \sum_{e\in\mit\Gamma_{h}
 \cup\mit\Gamma_D}||
 [\boldsymbol{n}\cdot\boldsymbol{w}]||^2_{0,e}.\label{eq160119-1}
\end{eqnarray}
Choosing  $\eta$ such that $1-\frac{C^2_0 n_0\eta |1-\alpha|}{2\beta}>0$ and $1-\frac{|1-\alpha|\rho^2}{2\eta}>0$, and taking
$C_s=\min\left\{1-\frac{C^2_0 n_0\eta |1-\alpha|}{2\beta}, 1-\frac{|1-\alpha|\rho^2}
 {2\eta}\right\}$, using (\ref{28}) and  (\ref{eq160119-1}), we see that (\ref{26}) holds. The proof is completed.
\end{proof}

\begin{thm}\label{th1}
There is a unique solution $\boldsymbol{u}_h$ to the variational problem (\ref{14}).
\end{thm}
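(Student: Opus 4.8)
The plan is to derive existence and uniqueness from the Lax--Milgram theorem applied on the finite-dimensional space $V_h$ equipped with the energy norm $|||\cdot|||$. The two structural ingredients are already available: Lemma \ref{lma2} supplies continuity of $B_h(\cdot,\cdot)$ on $V_h\times V_h$, and Lemma \ref{lma3} supplies coercivity, $B_h(\boldsymbol{w},\boldsymbol{w})\ge C_s|||\boldsymbol{w}|||^2$. Since those two lemmas are stated over $V_h$, the relevant problem here is the discrete scheme (\ref{15525-1}), which seeks $\boldsymbol{u}_h\in V_h$ with $B_h(\boldsymbol{u}_h,\boldsymbol{v}_h)=L(\boldsymbol{v}_h)$ for all $\boldsymbol{v}_h\in V_h$; it remains only to check the outstanding hypotheses and to assemble the argument.

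First I would verify that $|||\cdot|||$ is genuinely a norm, and not merely a seminorm, on $V_h$. Suppose $|||\boldsymbol{v}|||=0$ for some $\boldsymbol{v}\in V_h$. Then $|||\boldsymbol{v}|||^2_{\mathcal{T}_h}=\sum_{K}\int_K\boldsymbol{\sigma}(\boldsymbol{v}):\boldsymbol{\epsilon}(\boldsymbol{v})\,dx=0$, and since $\boldsymbol{C}$ is symmetric positive definite this forces $\boldsymbol{\epsilon}(\boldsymbol{v})=\boldsymbol{0}$ on every $K$, so $\boldsymbol{v}$ is a rigid-body motion on each element. The vanishing of $|||\boldsymbol{v}|||^2_{\partial\mathcal{T}_h}$ forces $[\boldsymbol{v}]=\boldsymbol{0}$ on all edges of $\mit\Gamma_h\cup\mit\Gamma_D$; continuity across the interior edges makes $\boldsymbol{v}$ a single global rigid-body motion, while $[\boldsymbol{v}]=\boldsymbol{v}^+=\boldsymbol{0}$ on $\mit\Gamma_D$ forces it to vanish on a set of positive measure. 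A rigid-body motion vanishing on such a portion of the boundary is identically zero, whence $\boldsymbol{v}=\boldsymbol{0}$; here the nonemptiness of $\mit\Gamma_D$ is used. Thus $(V_h,|||\cdot|||)$ is a finite-dimensional Hilbert space.

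Next I would confirm that $\boldsymbol{v}\mapsto L(\boldsymbol{v})$ is a bounded linear functional on $V_h$. Linearity is immediate from (\ref{16}), and boundedness follows by estimating each term through the Cauchy--Schwarz inequality together with the trace and inverse estimates already used in Lemma \ref{lma2} and the trace bounds (\ref{7})--(\ref{9}), yielding $|L(\boldsymbol{v})|\le C|||\boldsymbol{v}|||$ with $C$ depending on the data $\boldsymbol{f},\boldsymbol{g}_D,\boldsymbol{g}_N$ but not on $\boldsymbol{v}$. With continuity, coercivity and a bounded right-hand side in place, the Lax--Milgram theorem delivers a unique $\boldsymbol{u}_h\in V_h$ solving (\ref{15525-1}). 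Equivalently, because $V_h$ is finite-dimensional, coercivity makes the associated stiffness matrix positive definite, hence invertible, which gives unique solvability at once.

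Once the two lemmas are in hand the argument is essentially routine, and I expect the only genuinely delicate point to be the verification that $|||\cdot|||$ is a norm, that is, that zero energy forces $\boldsymbol{v}=\boldsymbol{0}$. This is exactly where the Dirichlet boundary $\mit\Gamma_D$ plays its role: without a Dirichlet portion of positive measure the nontrivial rigid-body motions would lie in the kernel of $B_h(\cdot,\cdot)$, and both the norm property and uniqueness would fail. I therefore regard the rigid-motion/kernel step as the main obstacle, with the remainder being a direct application of Lax--Milgram on a finite-dimensional space.
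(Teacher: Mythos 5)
Your proposal is correct and follows essentially the same route as the paper: both rest on the boundedness of Lemma \ref{lma2} and the coercivity of Lemma \ref{lma3}, and conclude unique solvability on $V_h$ via the Lax--Milgram theorem. In fact you supply two details the paper leaves implicit --- the verification that $|||\cdot|||$ is a genuine norm on $V_h$ (the rigid-body-motion kernel argument, which the paper's uniqueness step tacitly assumes when it passes from $|||\boldsymbol{u}^1_h-\boldsymbol{u}^2_h|||=0$ to $\boldsymbol{u}^1_h=\boldsymbol{u}^2_h$) and the boundedness of the functional $L$ --- so your write-up is, if anything, more complete.
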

\begin{proof}
Suppose $\boldsymbol{u}^1_h$ and $\boldsymbol{u}^2_h$ are two
different solution of (\ref{14}), then we have
\begin{equation*}
  B_h(\boldsymbol{u}^1_h-\boldsymbol{u}^2_h,\boldsymbol{v})=\boldsymbol{0} \quad \forall
  \boldsymbol{v}\in V_h.
\end{equation*}
Choosing $\boldsymbol{v}=\boldsymbol{u}^1_h-\boldsymbol{u}^2_h$ and using Lemma \ref{lma3},
we have
\begin{equation*}
  |||\boldsymbol{u}^1_h-\boldsymbol{u}^2_h|||=0,
\end{equation*}
which implies that $\boldsymbol{u}^1_h=\boldsymbol{u}^2_h$.

Using Lemma \ref{lma2} and Lemma \ref{lma3}, we can easily prove the existence of the numerical solution $\boldsymbol{u}_h$
by Lax-Milgram theorem and Resize theorem for symmetric schemes and nonsymmetric schemes, respectively. And we omit the details of the proof. The proof is completed.
\end{proof}

\section{Error estimates}
\setcounter{equation}{0}

In this section, we will prove the optimal convergence rate in terms of $h$ and $r$ but suboptimal with respect to $r$ if $\boldsymbol{u}_I$ is discontinuous for all the above methods, where $\boldsymbol{u}_I$ is the interpolation of $\boldsymbol{u}$.

\begin{lma}\label{lma4}
Let $\boldsymbol{u}\in \boldsymbol{H}^2(\mathcal{T}_h)$ . If $\boldsymbol{u}_I\in C(\bar{\Omega})\cap V_h$, then we have
\begin{equation}\label{30}
  |B_h(\boldsymbol{u}-\boldsymbol{u}_I, \boldsymbol{v})|\leq C \frac{h^{\mu-1}}{r^{s-1}}
  ||\boldsymbol{u}||_s|||\boldsymbol{v}|||\quad \forall \boldsymbol{v}\in V_h;
\end{equation}
If $\boldsymbol{u}_I\notin C(\bar{\Omega})$, then we get
\begin{equation}\label{31}
 |B_h(\boldsymbol{u}-\boldsymbol{u}_I, \boldsymbol{v})|\leq C \frac{h^{\mu-1}}{r^{s-3/2}}
 ||\boldsymbol{u}||_s|||\boldsymbol{v}|||\quad \forall \boldsymbol{v}\in V_h,
\end{equation}
where $s\geq 2$, and $C$ is a positive constant independent of $h$ and $r$.
\end{lma}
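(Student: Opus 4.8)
The plan is to set $\boldsymbol{\xi}=\boldsymbol{u}-\boldsymbol{u}_I$ and expand $B_h(\boldsymbol{\xi},\boldsymbol{v})$ exactly as in (\ref{15}), reusing the five-piece decomposition $T_1,\dots,T_5$ from the proof of Lemma \ref{lma2}: the volume term $\sum_K\int_K\boldsymbol{\sigma}(\boldsymbol{\xi}):\boldsymbol{\epsilon}(\boldsymbol{v})\,dx$, the consistency term $-\sum_e\int_e\{\boldsymbol{\sigma}(\boldsymbol{\xi})\boldsymbol{n}\}\cdot[\boldsymbol{v}]\,d\ell$, the adjoint term $\alpha\sum_e\int_e\{\boldsymbol{\sigma}(\boldsymbol{v})\boldsymbol{n}\}\cdot[\boldsymbol{\xi}]\,d\ell$, and the two penalty terms carrying the jump $[\boldsymbol{\xi}]$. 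I would bound each piece against $|||\boldsymbol{v}|||$ and read off the approximation factor in front.

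For the volume term I apply Cauchy--Schwarz exactly as in (\ref{22}) to split off $|||\boldsymbol{v}|||_{\mathcal{T}_h}\le|||\boldsymbol{v}|||$, leaving $|||\boldsymbol{\xi}|||_{\mathcal{T}_h}$, which is controlled by $\sum_K\|\boldsymbol{\xi}\|^2_{1,K}$; the $q=1$ case of (\ref{7}) then produces the factor $C\,h^{\mu-1}r^{-(s-1)}\|\boldsymbol{u}\|_s$. For the consistency term I insert the penalty weight and estimate $\big(\tfrac{h}{\beta r^2}\sum_e\|\{\boldsymbol{\sigma}(\boldsymbol{\xi})\boldsymbol{n}\}\|_{0,e}^2\big)^{1/2}$ against the $|||\boldsymbol{v}|||_{\partial\mathcal{T}_h}$ part of $|||\boldsymbol{v}|||$. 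Since $\|\boldsymbol{\sigma}(\boldsymbol{\xi})\boldsymbol{n}\|_{0,e}\le C\|\nabla\boldsymbol{\xi}\|_{0,e}$, I control the trace by a trace inequality together with the $q=1$ and $q=2$ cases of (\ref{7}) (this is where $s\ge2$ is used); tracking the powers of $h$ and $r$ carefully, the two volume-norm contributions combine to give the factor $C\,h^{\mu-1}r^{-(s-1)}\|\boldsymbol{u}\|_s$ for this term as well.

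The three remaining pieces all carry $[\boldsymbol{\xi}]$. Because the exact solution is continuous, $[\boldsymbol{u}]=0$ and hence $[\boldsymbol{\xi}]=-[\boldsymbol{u}_I]$; I bound $\|[\boldsymbol{\xi}]\|_{0,e}$ by its two one-sided traces through (\ref{8}), giving $C\,h^{\mu-1/2}r^{-(s-1/2)}\|\boldsymbol{u}\|_{s,K}$ on each edge. Feeding this into the penalty weight $\tfrac{\beta r^2}{h}$ (and the $\gamma$-term identically), finite overlap of the elements yields $\big(\tfrac{\beta r^2}{h}\sum_e\|[\boldsymbol{\xi}]\|_{0,e}^2\big)^{1/2}\le C\,h^{\mu-1}r^{-(s-3/2)}\|\boldsymbol{u}\|_s$, and Cauchy--Schwarz against the matching penalty part of $|||\boldsymbol{v}|||$ closes the two penalty terms. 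For the adjoint term I keep the same estimate on $[\boldsymbol{\xi}]$ and use the inverse estimate (\ref{23}) on $\boldsymbol{\sigma}(\boldsymbol{v})\boldsymbol{n}$ to trade the weight $\tfrac{h}{r^2}$ for $|||\boldsymbol{v}|||_{\mathcal{T}_h}$, again arriving at $C\,h^{\mu-1}r^{-(s-3/2)}\|\boldsymbol{u}\|_s$.

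Finally I separate the two cases. If $\boldsymbol{u}_I\in C(\bar{\Omega})\cap V_h$ then $[\boldsymbol{u}_I]=0$, so together with $[\boldsymbol{u}]=0$ the jump $[\boldsymbol{\xi}]$ vanishes on every edge; the adjoint and both penalty terms are then identically zero, only the volume and consistency contributions survive, and collecting them gives (\ref{30}) with the rate $r^{-(s-1)}$. If $\boldsymbol{u}_I\notin C(\bar{\Omega})$ the jump terms persist and, being the worst in the polynomial degree, they dominate and pull the rate down to $r^{-(s-3/2)}$, which is precisely (\ref{31}). I expect the delicate step to be the consistency term: forcing the normal-stress trace $\|\boldsymbol{\sigma}(\boldsymbol{\xi})\boldsymbol{n}\|_{0,e}$ down with the correct $h$- and $r$-exponents, so that it does not spoil the sharp $r^{-(s-1)}$ rate of the continuous case, is where the full $H^2$-regularity and the hp trace/approximation bounds must be used with most care about the powers of $r$.
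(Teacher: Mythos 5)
Your proposal is correct and follows essentially the same route as the paper's own proof: the identical five-term decomposition of $B_h(\boldsymbol{u}-\boldsymbol{u}_I,\boldsymbol{v})$, the observation that the adjoint and penalty terms vanish when $[\boldsymbol{u}-\boldsymbol{u}_I]=0$, and the same penalty-weighted Cauchy--Schwarz, inverse estimate (\ref{23}), and edge approximation (\ref{8}) arguments giving the $r^{-(s-3/2)}$ rate in the discontinuous case. The only minor deviation is the consistency term, where you use a trace inequality combined with the $q=1,2$ cases of (\ref{7}) (hence your need for $s\geq 2$) instead of the paper's direct edge estimate (\ref{9}); both yield the required $h^{\mu-1}r^{-(s-1)}$ factor, so the argument closes the same way.
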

\begin{proof}
Using (\ref{15}), we have
\begin{equation*}
    B_h(\boldsymbol{u}-\boldsymbol{u}_I,\boldsymbol{v}) = \sum_{K\in\mathcal{T}_h}\int_K\boldsymbol{\sigma}(\boldsymbol{u}-\boldsymbol{u}_I):\boldsymbol{\epsilon}
   (\boldsymbol{v})dx-
   \sum_{e\in\mit\Gamma_{h}\cup\mit\Gamma_D}\int_e\{\boldsymbol{\sigma}(\boldsymbol{u}
  -\boldsymbol{u}_I)\boldsymbol{n}\}\cdot[\boldsymbol{v}]d\ell
\end{equation*}
\begin{eqnarray}
  &+&\alpha\sum_{e\in\mit\Gamma_{h}\cup\mit\Gamma_D}\int_e\{\boldsymbol{\sigma}(\boldsymbol{v})\boldsymbol{n}\}
  \cdot[\boldsymbol{u}-\boldsymbol{u}_I]d\ell +
  \frac{\beta r^2}{h}\sum_{e\in\mit\Gamma_{h}\cup\mit\Gamma_D}
  \int_e[\boldsymbol{u}-\boldsymbol{u}_I]\cdot[\boldsymbol{v}]d\ell \nonumber \\
  &+&\frac{\gamma r^2}{h}\sum_{e\in\mit\Gamma_{h}\cup\mit\Gamma_D}\int_e[\boldsymbol{n}\cdot(\boldsymbol{u}
  -\boldsymbol{u}_I)]\cdot[\boldsymbol{n}\cdot\boldsymbol{v}]d\ell \nonumber \\
  &\leq& Q_1+Q_2+Q_3+Q_4+Q_5. \label{32}
\end{eqnarray}

As for every term of (\ref{32}), using the Cauchy-Schwarz inequality and (\ref{7})-(\ref{9}), we have
\begin{eqnarray}
 |Q_1|  &\leq & \sum_{K\in\mathcal{T}_h}\bigg(\int_K C_{ijkl}\epsilon_{kl}(\boldsymbol{u}-\boldsymbol{u}_I)
  \epsilon_{ij}(\boldsymbol{u}-\boldsymbol{u}_I)dx\bigg)^{\frac{1}{2}}\bigg(\int_KC_{ijkl}
  \epsilon_{kl}(\boldsymbol{v})\epsilon_{ij}(\boldsymbol{v})dx\bigg)^{\frac{1}{2}}\nonumber \\
&\leq & C \frac{h^{\mu-1}}{r^s}||\boldsymbol{u}||_s|||\boldsymbol{v}|||,\label{33}\\
  |Q_2| &\leq & \bigg(\frac{h}{\beta r^2}\sum_{e\in\mit\Gamma_{h}\cup\mit\Gamma_D}||\{C_{ijkl}\epsilon_{kl}
  (\boldsymbol{u}-\boldsymbol{u}_I)n_j\}||^2_{0,e}\bigg)^{\frac{1}{2}}
  \bigg(\frac{\beta r^2}{h}\sum_{e\in\mit\Gamma_{h}\cup\mit\Gamma_D}||[v_i]||^2_{0,e}
  \bigg)^{\frac{1}{2}}\nonumber \\
  &\leq& C\frac{h^{1/2}}{r}\bigg(\sum_{e\in\mit\Gamma_{h}\cup\mit\Gamma_D}||\boldsymbol{u}
  -\boldsymbol{u}_I||^2_{1,e}\bigg)^{1/2}|||\boldsymbol{v}|||\nonumber\\
  &\leq & C\frac{h^{\mu-1}}{r^{s-1/2}}||\boldsymbol{u}||_s|||\boldsymbol{v}|||
  \nonumber \\
  &\leq & C\frac{h^{\mu-1}}{r^{s-1}}||\boldsymbol{u}||_s|||\boldsymbol{v}|||,\label{34}
\end{eqnarray}
where we use the inequality $1\leq 1/r^{-\frac{1}{2}}$.

Since the terms $Q_3, Q_4$ and $Q_5$ vanish if the interpolation $\boldsymbol{u}_I\in C(\bar{\Omega})\cap V_h$ due to the jump $[\boldsymbol{u}-\boldsymbol{u}_I]=0$ on each edge, so we see that (\ref{30}) holds.

As for the case of $\boldsymbol{u}_I\notin C(\bar{\Omega})$, using the Cauchy-Schwarz inequality, the inverse estimate and (\ref{8}), we have
\begin{eqnarray}
|Q_3|&\leq& Ch^{-1/2}r\bigg(\sum_{K\in\mathcal{T}_h}||\boldsymbol{C}^{1/2}\boldsymbol{\epsilon}(\boldsymbol{v})||^2_{0,K}\bigg)^{\frac{1}{2}}
  \bigg(\sum_{e\in\mit\Gamma_h\cup\mit\Gamma_D}||[\boldsymbol{u}-\boldsymbol{u}_I]||^2_{0,e}\bigg)^{\frac{1}{2}}\nonumber\\
&\leq & C\frac{h^{\mu-1}}{r^{s-3/2}}||\boldsymbol{u}||_s|||\boldsymbol{v}|||,\label{36}\\
|Q_4+Q_5| &\leq&\sum_{e\in\mit\Gamma_{h}\cup\mit\Gamma_D}\bigg(\frac{\beta r^2}{h}\int_e[\boldsymbol{u}-
  \boldsymbol{u}_I]^2d\ell\bigg)^{\frac{1}{2}}\bigg(\frac{\beta r^2}{h}\int_e[\boldsymbol{v}]^2d\ell
  \bigg)^{\frac{1}{2}}  \nonumber \\
   &+& \sum_{e\in\mit\Gamma_{h}\cup\mit\Gamma_D} \bigg(\frac{\gamma r^2}{h}\int_e[\boldsymbol{n}\cdot
   (\boldsymbol{u}-\boldsymbol{u}_I)]^2d\ell\bigg)^{\frac{1}{2}}\bigg(\frac{\gamma r^2}{h}
  \int_e[\boldsymbol{n}\cdot\boldsymbol{v}]^2d\ell\bigg)^{\frac{1}{2}} \nonumber\\
   &\leq & Ch^{-1/2}r\bigg(\sum_{e\in\mit
   \Gamma_{h}\cup\mit\Gamma_D}||[\boldsymbol{u}- \boldsymbol{u}_I]||^2_{0,e}\bigg) ^{1/2}
   |||\boldsymbol{v}||| \nonumber\\
  &\leq & C\frac{h^{\mu-1}}{r^{s-3/2}}
   ||\boldsymbol{u}||_s|||\boldsymbol{v}|||.\label{eq160119-2}
\end{eqnarray}

Using (\ref{33}), (\ref{34}), (\ref{36}), (\ref{eq160119-2}), we see that (\ref{31}) holds. This completes the proof.
\end{proof}

Next, we give the main result as follows:
\begin{thm}\label{th2}
Under the assumption of Lemma \ref{lma4}, there is a positive constant independent of $h$ and $r$ such
that
\begin{equation}\label{39}
  |||\boldsymbol{u}-\boldsymbol{u}_h|||\leq C \frac{h^{\mu-1}}{r^{s-1}}||\boldsymbol{u}||_s.
\end{equation}
If the interpolation $\boldsymbol{u}_I\notin C(\bar{\Omega})$, then
\begin{equation}\label{40}
  |||\boldsymbol{u}-\boldsymbol{u}_h|||\leq C \frac{h^{\mu-1}}{r^{s-3/2}}||\boldsymbol{u}||_s
\end{equation}
holds for $s\geq 2$.
\end{thm}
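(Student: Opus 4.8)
The plan is to run a standard C\'ea/Strang-type argument resting on the coercivity of Lemma~\ref{lma3}, the Galerkin orthogonality that follows from consistency, and the interpolation bounds of Lemma~\ref{lma4}. First I would split the error through the interpolant, $\boldsymbol{u}-\boldsymbol{u}_h=(\boldsymbol{u}-\boldsymbol{u}_I)+(\boldsymbol{u}_I-\boldsymbol{u}_h)$, and set $\boldsymbol{\xi}:=\boldsymbol{u}_I-\boldsymbol{u}_h\in V_h$, so that by the triangle inequality
\begin{equation*}
  |||\boldsymbol{u}-\boldsymbol{u}_h|||\leq |||\boldsymbol{u}-\boldsymbol{u}_I|||+|||\boldsymbol{\xi}|||.
\end{equation*}
Since the exact solution satisfies $B_h(\boldsymbol{u},\boldsymbol{v})=L(\boldsymbol{v})$ for every $\boldsymbol{v}\in\boldsymbol{H}^s(\mathcal{T}_h)$ by Theorem~\ref{lma1}, while the discrete solution satisfies (\ref{15525-1}) on $V_h\subset\boldsymbol{H}^s(\mathcal{T}_h)$, subtracting the two identities gives the orthogonality $B_h(\boldsymbol{u}-\boldsymbol{u}_h,\boldsymbol{v}_h)=0$ for all $\boldsymbol{v}_h\in V_h$.

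Second, I would control the discrete part $|||\boldsymbol{\xi}|||$. Applying Lemma~\ref{lma3} and then the bilinearity of $B_h$,
\begin{equation*}
  C_s|||\boldsymbol{\xi}|||^2\leq B_h(\boldsymbol{\xi},\boldsymbol{\xi})
  =B_h(\boldsymbol{u}_I-\boldsymbol{u},\boldsymbol{\xi})+B_h(\boldsymbol{u}-\boldsymbol{u}_h,\boldsymbol{\xi}),
\end{equation*}
and the last term vanishes by orthogonality because $\boldsymbol{\xi}\in V_h$. Hence $C_s|||\boldsymbol{\xi}|||^2\leq|B_h(\boldsymbol{u}-\boldsymbol{u}_I,\boldsymbol{\xi})|$, and Lemma~\ref{lma4} applied with $\boldsymbol{v}=\boldsymbol{\xi}$ gives, after dividing by $|||\boldsymbol{\xi}|||$,
\begin{equation*}
  |||\boldsymbol{\xi}|||\leq \frac{C}{C_s}\frac{h^{\mu-1}}{r^{s-1}}||\boldsymbol{u}||_s
  \quad\text{or}\quad
  |||\boldsymbol{\xi}|||\leq \frac{C}{C_s}\frac{h^{\mu-1}}{r^{s-3/2}}||\boldsymbol{u}||_s,
\end{equation*}
according to whether $\boldsymbol{u}_I\in C(\bar{\Omega})\cap V_h$ or $\boldsymbol{u}_I\notin C(\bar{\Omega})$.

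Third, I would bound the interpolation part $|||\boldsymbol{u}-\boldsymbol{u}_I|||$ directly from the approximation properties (\ref{7})--(\ref{9}). Its volume component $|||\boldsymbol{u}-\boldsymbol{u}_I|||_{\mathcal{T}_h}$ reduces, up to the constants in $\boldsymbol{C}$, to the elementwise $H^1$-seminorm of $\boldsymbol{u}-\boldsymbol{u}_I$ and is controlled by (\ref{7}) with $q=1$, giving the optimal rate $h^{\mu-1}/r^{s-1}$. For the boundary component, each weighted jump $\frac{r^2}{h}\|[\boldsymbol{u}-\boldsymbol{u}_I]\|_{0,e}^2$ is estimated by the trace bound (\ref{8}); the power count
\begin{equation*}
  \frac{r^2}{h}\Big(\frac{h^{\mu-1/2}}{r^{s-1/2}}\Big)^2=\frac{h^{2\mu-2}}{r^{2s-3}}
\end{equation*}
shows these terms scale like $h^{\mu-1}/r^{s-3/2}$. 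When $\boldsymbol{u}_I\in C(\bar{\Omega})\cap V_h$ is taken to match the data on $\Gamma_D$, the jumps $[\boldsymbol{u}-\boldsymbol{u}_I]$ vanish on every edge (as already exploited in Lemma~\ref{lma4}), so $|||\boldsymbol{u}-\boldsymbol{u}_I|||$ stays at the optimal $h^{\mu-1}/r^{s-1}$; otherwise the jump terms are present and contribute the weaker $h^{\mu-1}/r^{s-3/2}$.

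Finally, inserting the two bounds into the triangle inequality and retaining the larger contribution yields (\ref{39}) when $\boldsymbol{u}_I$ is continuous and (\ref{40}) when it is not. The coercivity-plus-orthogonality step is routine; the point I expect to require the most care is the $h$--$r$ bookkeeping for $|||\boldsymbol{u}-\boldsymbol{u}_I|||$, namely verifying that the $\frac{r^2}{h}$-weighted jump seminorm together with the trace estimate (\ref{8}) degrades the $r$-exponent by exactly one half. This is precisely what forces the suboptimal $r^{s-3/2}$ rate in the discontinuous case and, conversely, what must be shown to vanish in the continuous case so that the volume term's optimal $r^{s-1}$ rate survives and the two error components balance.
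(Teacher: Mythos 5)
Your proposal is correct, and its core --- the treatment of the discrete part $\boldsymbol{\xi}=\boldsymbol{u}_I-\boldsymbol{u}_h$ by coercivity (Lemma~\ref{lma3}), Galerkin orthogonality from consistency, and Lemma~\ref{lma4} --- is exactly the paper's argument for the term $|||\boldsymbol{u}_h-\boldsymbol{u}_I|||$. Where you genuinely diverge is the interpolation part: you bound $|||\boldsymbol{u}-\boldsymbol{u}_I|||$ directly from the approximation properties (\ref{7})--(\ref{8}), doing the $h$--$r$ power count $\frac{r^2}{h}\bigl(h^{\mu-1/2}/r^{s-1/2}\bigr)^2=h^{2\mu-2}/r^{2s-3}$ for the weighted jump terms, whereas the paper instead applies the coercivity bound (\ref{26}) to $\boldsymbol{u}-\boldsymbol{u}_I$ itself, writing $C_s|||\boldsymbol{u}-\boldsymbol{u}_I|||^2\leq B_h(\boldsymbol{u}-\boldsymbol{u}_I,\boldsymbol{u}-\boldsymbol{u}_I)$ and then invoking Lemma~\ref{lma4} with $\boldsymbol{v}=\boldsymbol{u}-\boldsymbol{u}_I$, so that both error components are handled by the same coercivity-plus-Lemma~\ref{lma4} mechanism. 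The paper's route is more compact, but it stretches Lemma~\ref{lma3} and Lemma~\ref{lma4} beyond their stated hypotheses: both are formulated only for arguments in $V_h$, and their proofs rest on the inverse estimate (\ref{23}), which is valid for polynomials but not for $\boldsymbol{u}-\boldsymbol{u}_I\notin V_h$ (for SIPG/IIPG the coercivity constant genuinely depends on this inverse inequality). Your direct estimate of $|||\boldsymbol{u}-\boldsymbol{u}_I|||$ sidesteps that issue entirely and is the more standard and more defensible way to close the argument; it also makes transparent, as you note, precisely where the half-power loss $r^{s-1}\to r^{s-3/2}$ enters in the discontinuous case and why it disappears when the jumps of $\boldsymbol{u}-\boldsymbol{u}_I$ vanish. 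The final rates you obtain agree with (\ref{39}) and (\ref{40}).
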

\begin{proof}
Usinging (\ref{26}), we have
\begin{eqnarray}
  C_s|||\boldsymbol{u}-\boldsymbol{u}_I|||^2 &\leq& B_h(\boldsymbol{u}-\boldsymbol{u}_I,\boldsymbol{u}-\boldsymbol{u}_I) \nonumber \\
   &\leq& C \frac{h^{\mu-1}}{r^\tau}||\boldsymbol{u}||_s|||\boldsymbol{u}-\boldsymbol{u}_I|||, \label{41}\\
  C_s |||\boldsymbol{u}_h-\boldsymbol{u}_I|||^2&\leq& B_h(\boldsymbol{u}_h-\boldsymbol{u}_I,\boldsymbol{u}_h-\boldsymbol{u}_I) \nonumber \\
   &=& B_h(\boldsymbol{u}-\boldsymbol{u}_I,\boldsymbol{u}_h-\boldsymbol{u}_I)-
   B_h(\boldsymbol{u}-\boldsymbol{u}_h,\boldsymbol{u}_h-\boldsymbol{u}_I)\nonumber\\
   &=& B_h(\boldsymbol{u}-\boldsymbol{u}_I,\boldsymbol{u}_h-\boldsymbol{u}_I) \nonumber\\
   &\leq & C \frac{h^{\mu-1}}{r^\tau}||\boldsymbol{u}||_s|||\boldsymbol{u}_h-\boldsymbol{u}_I|||,\label{42}
\end{eqnarray}
where $\tau=s-1$ if $\boldsymbol{u}_I$ is continuous, otherwise $\tau=s-3/2$.

Using the triangle inequality and (\ref{42}), we have
\begin{eqnarray}\label{45}
  |||\boldsymbol{u}-\boldsymbol{u}_h||| &\leq& |||\boldsymbol{u}-\boldsymbol{u}_I|||+
  |||\boldsymbol{u}_h-\boldsymbol{u}_I||| \nonumber\\
   &\leq&  C \frac{h^{\mu-1}}{r^\tau}||\boldsymbol{u}||_s,
\end{eqnarray}
which completes the proof.
\end{proof}

\begin{rem}\label{rem5}
From (\ref{39}), we know that the error estimate is optimal in terms of  both $h$-convergence and $r$-convergence, however, (\ref{40}) shows that the error estimate is optimal in terms of $h$-convergence but suboptimal with respect to the
polynomial degree $r$.
\end{rem}

Next, we prove the error estimates in $L^2$-norm. As for SIPG method, we easily achieve the optimal order
convergence in $L^2$-norm by Aubin-Nitsche technique because the method satisfies the following adjoint consistency condition
\begin{equation}\label{46}
  B_h(\boldsymbol{v},\boldsymbol{u})=\int_{\Omega}\boldsymbol{v}\cdot\boldsymbol{f}dx \quad
  \forall \boldsymbol{v}\in V_h.
\end{equation}
However, the argument fails for IIPG method and NIPG method which are adjoint inconsistent, so we display the superpenalizaion term and show that the optimal order convergence in $L^2$-norm, our main idea mainly comes from \cite{4} and \cite{12}. As for IIPG method and NIPG method, we choose the superpenalizaion terms as follows:
\begin{equation}\label{48}
  \mathcal{K}(\boldsymbol{w},\boldsymbol{v})=\frac{\beta r^2}{h^d}\sum_{e\in\mit\Gamma_{h}\cup
  \mit\Gamma_D}\int_e[\boldsymbol{w}]\cdot[\boldsymbol{v}]d\ell+\frac{\gamma r^2}{h^d}\sum_{e\in\mit
  \Gamma_{h}\cup\mit\Gamma_D}\int_e[\boldsymbol{n}\cdot\boldsymbol{w}][\boldsymbol{n}\cdot\boldsymbol{v}]d\ell.
\end{equation}
Define the new energy norm as
\begin{equation}\label{49}
  |||\boldsymbol{v}|||=\left(|||\boldsymbol{v}|||^2_{\mathcal{T}_h}+\mathcal{K}(\boldsymbol{v},
  \boldsymbol{v})\right)^{\frac{1}{2}}.
\end{equation}

It is easy to check that the boundedness, stability and Theorem (\ref{th2}) still hold with respect to the new energy norm (\ref{49}).

Now, we give the following main result:
\begin{thm}\label{th3}
For SIPG method, there exists a positive constant $C$ independent of $h$ such that
\begin{equation}\label{51}
  ||\boldsymbol{u}-\boldsymbol{u}_h||_{0}\leq C h^{\mu}||\boldsymbol{u}||_s.
\end{equation}
For IIPG method and NIPG method, the optimal error estimate
also can be achieved if $d\geq 3$ under the assumptions of (\ref{48}) and (\ref{49}).
\end{thm}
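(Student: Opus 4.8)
The plan is to carry out an Aubin--Nitsche duality argument, handling the symmetric (SIPG) and the two nonsymmetric (IIPG, NIPG) methods separately. In each case I introduce the dual problem $-\nabla\cdot\boldsymbol{\sigma}(\boldsymbol{\psi})=\boldsymbol{u}-\boldsymbol{u}_h$ in $\Omega$ with the homogeneous counterpart of the boundary conditions in (\ref{11}), and use elliptic regularity on the convex domain $\Omega$ to obtain $\boldsymbol{\psi}\in\boldsymbol{H}^2(\Omega)$ with $\|\boldsymbol{\psi}\|_2\le C\|\boldsymbol{u}-\boldsymbol{u}_h\|_0$. Writing $\boldsymbol{e}=\boldsymbol{u}-\boldsymbol{u}_h$ for the error and letting $\boldsymbol{\psi}_I\in C(\bar{\Omega})\cap V_h$ be the continuous interpolant of $\boldsymbol{\psi}$, I will repeatedly use the Galerkin orthogonality $B_h(\boldsymbol{e},\boldsymbol{v}_h)=0$ for all $\boldsymbol{v}_h\in V_h$, which follows from the consistency in Theorem \ref{lma1} together with the scheme (\ref{15525-1}).

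For the SIPG method the adjoint consistency (\ref{46}), applied to the dual problem, gives $\|\boldsymbol{e}\|_0^2=(\boldsymbol{e},\boldsymbol{e})=B_h(\boldsymbol{e},\boldsymbol{\psi})=B_h(\boldsymbol{e},\boldsymbol{\psi}-\boldsymbol{\psi}_I)$, the last equality by Galerkin orthogonality. I then bound this using Lemma \ref{lma2}, namely $|B_h(\boldsymbol{e},\boldsymbol{\psi}-\boldsymbol{\psi}_I)|\le C_b|||\boldsymbol{e}||||||\boldsymbol{\psi}-\boldsymbol{\psi}_I|||$. Since $\boldsymbol{\psi}_I$ is continuous, every jump contribution to $|||\boldsymbol{\psi}-\boldsymbol{\psi}_I|||$ vanishes, so the approximation estimates (\ref{7})--(\ref{9}) with $s=2$ give $|||\boldsymbol{\psi}-\boldsymbol{\psi}_I|||\le Ch\|\boldsymbol{\psi}\|_2$. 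Inserting the energy error bound (\ref{39}) for $|||\boldsymbol{e}|||$ and the regularity estimate $\|\boldsymbol{\psi}\|_2\le C\|\boldsymbol{e}\|_0$ yields $\|\boldsymbol{e}\|_0^2\le C h^{\mu}\|\boldsymbol{u}\|_s\,\|\boldsymbol{e}\|_0$, and dividing by $\|\boldsymbol{e}\|_0$ produces (\ref{51}).

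The IIPG and NIPG methods are the genuine difficulty, since (\ref{46}) no longer holds. Integrating $\sum_{K}\int_K\boldsymbol{\sigma}(\boldsymbol{e}):\boldsymbol{\epsilon}(\boldsymbol{\psi})$ by parts on each element and using $[\boldsymbol{\psi}]=0$, I find the adjoint defect $B_h(\boldsymbol{e},\boldsymbol{\psi})=(\boldsymbol{e},\boldsymbol{e})+(1+\alpha)\sum_{e\in\mit\Gamma_{h}\cup\mit\Gamma_D}\int_e\{\boldsymbol{\sigma}(\boldsymbol{\psi})\boldsymbol{n}\}\cdot[\boldsymbol{e}]\,d\ell$, whence, after Galerkin orthogonality, $\|\boldsymbol{e}\|_0^2=B_h(\boldsymbol{e},\boldsymbol{\psi}-\boldsymbol{\psi}_I)-(1+\alpha)\sum_{e\in\mit\Gamma_{h}\cup\mit\Gamma_D}\int_e\{\boldsymbol{\sigma}(\boldsymbol{\psi})\boldsymbol{n}\}\cdot[\boldsymbol{e}]\,d\ell$. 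The first term is $O(h^{\mu}\|\boldsymbol{u}\|_s\|\boldsymbol{e}\|_0)$ exactly as in the SIPG case. The residual term is where the superpenalization (\ref{48})--(\ref{49}) is essential: by Cauchy--Schwarz I control $\big(\sum_e\|[\boldsymbol{e}]\|_{0,e}^2\big)^{1/2}$ by $C(h^{d}/r^2)^{1/2}|||\boldsymbol{e}|||$ using the penalty weight $h^{-d}$ in the new norm (\ref{49}), and I estimate $\big(\sum_e\|\{\boldsymbol{\sigma}(\boldsymbol{\psi})\boldsymbol{n}\}\|_{0,e}^2\big)^{1/2}\le Ch^{-1/2}\|\boldsymbol{\psi}\|_2$ by the trace inequality for $\boldsymbol{\psi}\in\boldsymbol{H}^2(\Omega)$. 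This gives a residual bounded by $C\,h^{(d-1)/2}r^{-1}|||\boldsymbol{e}|||\,\|\boldsymbol{\psi}\|_2$, and inserting (\ref{39}) and $\|\boldsymbol{\psi}\|_2\le C\|\boldsymbol{e}\|_0$ turns it into $C\,h^{(d-1)/2+\mu-1}\|\boldsymbol{u}\|_s\|\boldsymbol{e}\|_0$. The main obstacle, and precisely the point of the hypothesis $d\ge 3$, is to absorb this residual into the optimal rate: since $(d-1)/2\ge 1$ when $d\ge 3$, the exponent satisfies $(d-1)/2+\mu-1\ge\mu$, so the residual is also $O(h^{\mu}\|\boldsymbol{u}\|_s\|\boldsymbol{e}\|_0)$; dividing by $\|\boldsymbol{e}\|_0$ then yields the optimal $L^2$-estimate for IIPG and NIPG as well.
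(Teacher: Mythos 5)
Your proposal is correct and follows essentially the same Aubin--Nitsche duality argument as the paper: the identical SIPG treatment (adjoint consistency, Galerkin orthogonality, boundedness, interpolation of the dual solution, elliptic regularity), and for IIPG/NIPG the same identification of the adjoint defect term (your explicit $(1+\alpha)$ is the paper's $\theta$), the same Cauchy--Schwarz bound exploiting the superpenalization weight $h^{-d}$ to obtain the $h^{(d-1)/2}$ factor, and the same use of $d\geq 3$ to absorb it. Your version is in fact slightly more careful in two spots where the paper is loose: you invoke a trace inequality for the $H^2$ dual solution (the paper cites an ``inverse estimate,'' which strictly applies only to polynomials), and you pose the dual problem with the homogeneous mixed boundary conditions rather than the paper's pure Neumann condition, which would require a compatibility condition to be well-posed.
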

\begin{proof}
As for SIPG method, we consider the dual problem:
\begin{eqnarray}\label{53}
  -\nabla\cdot\boldsymbol{\sigma}(\boldsymbol{\varphi}) = \boldsymbol{u}-\boldsymbol{u}_h \quad
   \textrm{in $\Omega$},\quad \quad  \boldsymbol{\sigma}(\boldsymbol{\varphi})\boldsymbol{n}
   = \boldsymbol{0}\quad \textrm{on $\partial\Omega$}.
\end{eqnarray}
Taking $\boldsymbol{v}=\boldsymbol{u}-\boldsymbol{u}_h$, we have
\begin{eqnarray}
  ||\boldsymbol{u}-\boldsymbol{u}_h||^2_{0} &=& B_h(\boldsymbol{u}-\boldsymbol{u}_h,\boldsymbol{\varphi}) \nonumber\\
   &=& B_h(\boldsymbol{u}-\boldsymbol{u}_h,\boldsymbol{\varphi}-\boldsymbol{\varphi}_I)+
   B_h(\boldsymbol{u}-\boldsymbol{u}_h,\boldsymbol{\varphi}_I) \nonumber\\
   &=& B_h(\boldsymbol{u}-\boldsymbol{u}_h,\boldsymbol{\varphi}-\boldsymbol{\varphi}_I)\label{55}\\
  &\leq& C_b|||\boldsymbol{\varphi}-\boldsymbol{\varphi}_I|||
  |||\boldsymbol{u}-\boldsymbol{u}_h|||\nonumber\\
  &\leq& C h||\boldsymbol{\varphi}||_2|||\boldsymbol{u}-\boldsymbol{u}_h|||,\label{15526-1}
\end{eqnarray}
where $\boldsymbol{\varphi}_I$ is the interpolation of $\boldsymbol{\varphi}_I$.

Due to the elliptic regularity, we have
\begin{equation}\label{57}
  ||\boldsymbol{\varphi}||_2\leq C||\boldsymbol{u}-\boldsymbol{u}_h||_{0}.
\end{equation}
Using Theorem \ref{th2}  and (\ref{57}), we get
\begin{eqnarray}\label{58}
  ||\boldsymbol{u}-\boldsymbol{u}_h||_{0} \leq  Ch |||\boldsymbol{u}-\boldsymbol{u}_h|||
  \leq  Ch^{\mu}||\boldsymbol{u}||_s.
\end{eqnarray}

As for IIPG method and NIPG method, we have
\begin{eqnarray}\label{59}
 ||\boldsymbol{u}-\boldsymbol{u}_h||^2_{0}&=&B_h(\boldsymbol{u}-\boldsymbol{u}_h,\boldsymbol{\varphi})\nonumber\\
 &&-
 \theta\sum_{e\in\mit\Gamma_{h}\cup\mit\Gamma_D}\int_e\{\boldsymbol{\sigma}(\boldsymbol{\varphi})
  \boldsymbol{n}\}\cdot[\boldsymbol{u}-\boldsymbol{u}_h]d\ell.
  \end{eqnarray}

Using the Cauchy-Schwarz inequality and the inverse estimate, we obtain
\begin{eqnarray}\label{50}
  &&\sum_{e\in\mit\Gamma_{h}\cup\mit\Gamma_D}\int_e\{\boldsymbol{\sigma}(\boldsymbol{w})
  \boldsymbol{n}\}\cdot[\boldsymbol{v}]d\ell  \nonumber\\
  & & \leq \sum_{e\in\mit\Gamma_{h}\cup\mit\Gamma_D}\bigg(\frac{h^d}{\beta r^2}\int_e
  |\{C_{ijkl}\epsilon_{kl}(\boldsymbol{w})n_j\}|^2d\ell\bigg)^{\frac{1}{2}}\bigg(\frac{\beta r^2}
  {h^d}\int_e[v_i]^2d\ell\bigg)^{\frac{1}{2}} \nonumber\\
  &&\leq \bigg(\frac{h^d}{\beta r^2}\sum_{e\in\mit\Gamma_{h}\cup\mit\Gamma_D}
  ||\{C_{ijkl}\epsilon_{kl}(\boldsymbol{w})n_j\}||^2_{0,e}\bigg)^{\frac{1}{2}}
  \bigg(\frac{\beta r^2}{h^d}\sum_{e\in\mit\Gamma_{h}\cup\mit\Gamma_D}||[v_i]||
  ^2_{0,e}\bigg)^{\frac{1}{2}} \nonumber\\
  && \leq Ch^{\frac{d-1}{2}}\bigg(\sum_{K\in\mathcal{T}_h}||
  \boldsymbol{C}^{1/2}\boldsymbol{\epsilon}(\boldsymbol{w})||^2_{0,K}\bigg)^{\frac{1}{2}}
  \bigg(\frac{\beta r^2}{h^d}\sum_{e\in\mit\Gamma_{h}
  \cup\mit\Gamma_D}||[\boldsymbol{v}]||^2_{0,e}\bigg)^{\frac{1}{2}} \nonumber\\
  & & \leq C h^{\frac{d-1}{2}}||\boldsymbol{w}||_{2}|||\boldsymbol{v}|||.
\end{eqnarray}
Using (\ref{50}) and (\ref{59}), we have
\begin{eqnarray}\label{61}
&&-\theta\sum_{e\in\mit\Gamma_{h}\cup\mit\Gamma_D}\int_e\{\boldsymbol{\sigma}
   (\boldsymbol{\varphi})\boldsymbol{n}\}\cdot[\boldsymbol{u}-\boldsymbol{u}_h]d\ell   \nonumber\\
   &&\leq Ch^{\frac{d-1}{2}}||\boldsymbol{\varphi}||_2 |||\boldsymbol{u}-\boldsymbol{u}_h|||.
\end{eqnarray}
Using (\ref{57}), (\ref{59}), (\ref{58}) and (\ref{61}), we have
\begin{eqnarray}\label{62}
   ||\boldsymbol{u}-\boldsymbol{u}_h||_0 &\leq& Ch|||\boldsymbol{u}-\boldsymbol{u}_h|||+
    Ch^{\frac{d-1}{2}}|||\boldsymbol{u}-\boldsymbol{u}_h|||\nonumber\\
   &\leq& Ch\cdot h^{\mu-1}||\boldsymbol{u}||_s +Ch^{\frac{d-1}{2}}\cdot h^{\mu-1}||\boldsymbol{u}||_s\nonumber\\
   &=& C h^{\mu}||\boldsymbol{u}||_s + Ch^{\mu+\frac{d-3}{2}}
   ||\boldsymbol{u}||_s,
\end{eqnarray}
which completes the proof.
\end{proof}

\section{Numerical tests}
\setcounter{equation}{0}

In this section, we present a 2-D numerical example  in $\Omega=(-1, 1)\times(-1, 1)$
with homogeneous Dirichlet boundary condition and empty Neumann boundary.

Let $\lambda=0.03$, $\mu=0.035$ and
\begin{eqnarray*}
  \boldsymbol{f}(x,y)=\lambda\Big(\frac{\pi^2}{4}\zeta_1,\frac{\pi^2}{4}\zeta_1\Big)^T
  +2\mu\Big(\frac{\pi^2}{4}\zeta_2
  +\frac{\pi^2}{8}\zeta_1,
  \frac{\pi^2}{4}\zeta_2+\frac{\pi^2}{8}\zeta_1\Big)^T
\end{eqnarray*}
with $\zeta_1=\cos(\frac{\pi}{2}x+\frac{\pi}{2}y)$ and $\zeta_2=\cos(\frac{\pi}{2}x)\cos(\frac{\pi}{2}y)$.

It is easy to check that the exact solution is
\begin{equation*}
  \boldsymbol{u}(x,y)=\Big(\cos(\frac{\pi}{2}x)\cos(\frac{\pi}{2}y),
  \cos(\frac{\pi}{2}x)\cos(\frac{\pi}{2}y)\Big)^T.
\end{equation*}

In the computation, we set $\beta=125$. For the adjoint inconsistent methods, we use superpenalization  and choose $d=3$. The numerical results of errors in $L^2$-norm and the energy norm are displayed in Table 1 as follows.
\begin{table}[H]
\setlength{\abovecaptionskip}{15pt}   
\setlength{\belowcaptionskip}{10pt}   
\small
\caption{Errors in $L^2$-norm and the energy norm}
\begin{tabular}{c c c c c c c c}
  \hline
  Method & k,d & & $h=2^{-1} $& $ 2^{-2}$& $ 2^{-3} $& $2^{-4}$& $2^{-5}$ \\ \hline
  SIPG & $k=1$ & $||\boldsymbol{u}-\boldsymbol{u}_h||_0$ & 0.12213 & 0.03113 & 0.00745 & 0.00150 &0.00038  \\
       &       & $|||\boldsymbol{u}-\boldsymbol{u}_h|||$ & 0.20320 & 0.10402 & 0.05375 & 0.02985 &0.01982  \\

  IIPG & $k=1$ & $||\boldsymbol{u}-\boldsymbol{u}_h||_0$ & 0.12256 & 0.03161 & 0.00796 & 0.00199 &0.00049  \\
       & $d=3$ & $|||\boldsymbol{u}-\boldsymbol{u}_h|||$ & 0.20305 & 0.10333 & 0.05190 & 0.02598 &0.01299  \\

  NIPG & $k=1$ & $||\boldsymbol{u}-\boldsymbol{u}_h||_0$ & 0.12275 & 0.03171 & 0.00799 & 0.00200 &0.00050  \\
       & $d=3$& $|||\boldsymbol{u}-\boldsymbol{u}_h|||$  & 0.20306 & 0.10333 & 0.05190 & 0.02598 &0.01299 \\
  \hline
\end{tabular}
\end{table}

The comparisons of $||\boldsymbol{u}-\boldsymbol{u}_h||_0$, $|||\boldsymbol{u}-\boldsymbol{u}_h|||$
in $\ln$-$\ln$ scale for all three methods are displayed in Figure \ref{fg1} and Figure \ref{fg2}.

\begin{figure}[H]
\centering
\includegraphics[width=0.7\textwidth]{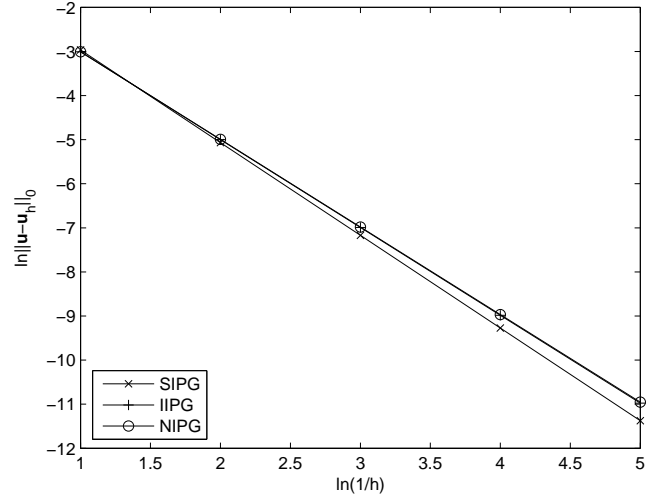}
\caption{$||\boldsymbol{u}-\boldsymbol{u}_h||_0$ in $\ln$-$\ln$ scale for the three methods}\label{fg1}
\end{figure}
\begin{figure}[H]
\centering
\includegraphics[width=0.7\textwidth]{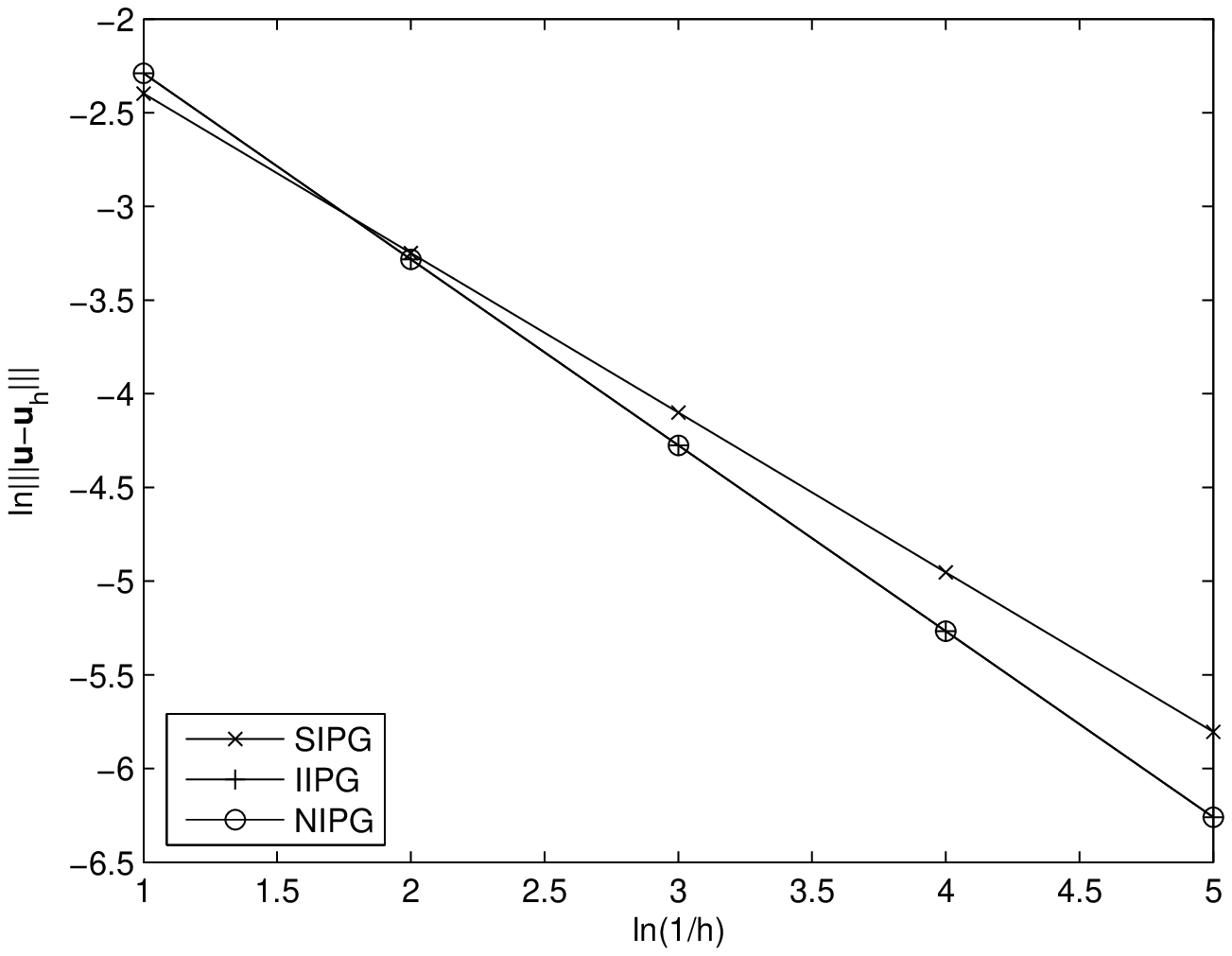}
\caption{$|||\boldsymbol{u}-\boldsymbol{u}_h|||$ in $\ln$-$\ln$ scale for the three methods}\label{fg2}
\end{figure}

From the above figures and Table 1, we find that the optimal convergence rate in the energy norm is got for the three methods, and the optimal convergence rate in $L^2$-norm is achieved for SIPG method, and are obtained for both IIPG method and NIPG method when $d=3$, which conform with the theoretical results of Theorem \ref{th2} and Theorem \ref{th3}.


\begin{thebibliography}{00}

\bibitem{6}{L. Adrian, N. Patrizio, S. Deborah, O. Michael, Optimal BV estimates for a discontinuous Galerkin method for linear elasticity, AMRX Appl. Math. Res. Express 3 (2004) 73-106.}

 \bibitem{4}{D.N. Arnold, F. Brezzi, B. Cockburn, L.D. Marini, Unified analysis of discontinuous Galerkin methods for elliptic problems, SIAM J. Numer. Anal. 39 (2002) 1749-1779.}

\bibitem{9}{I. Babu\v{s}ka, M. Suri, The hp version of the finite element method with quasiuniform meshes, Math. Modeling Numer. Anal. 21 (1987) 199-238.}

\bibitem{7}{Z. Cai, X. Ye, A mixed nonconforming finite element for linear elasticity, Numer. Methods Part. Diff. Eqs. 21 (2005) 1043-1051.}

\bibitem{re4}{Y. Chen, J. Huang, X. Huang, Y. Xu, On the local discontinuous Galerkin method for linear elasticity, Mathematical Problems in Engineering 2010 (2010) 1-20.}

\bibitem{1}{B. Cockburn, G.E. Karniadakis, C. Shu, Discontinuous Galerkin Methods: Theory, Computation and Applications, Lecture Notes in Computational Science in Engineering, Vol. 11, Springer-Verlag, 2000.}



\bibitem{re2}{K. Feng, Z. Shi, Mathematical Theory of Elastic Structures, Springer, Berlin, 1995.}


\bibitem{5}{P. Hansbo, M. G. Larson, Discontinuous Galerkin methods for incompressible and nearly incompressible elasticity by Nitsche's method, Comput. Methods Appl. Mech. Engrg. 191 (2002) 1895-1908.}

\bibitem{re3}{P. Houston, D. Sch\"{o}tzau, T.P. Wihler, An hp-adaptive mixed discontinuous Galerkin FEM for nearly incompressible linear elasticity, Comput. Methods Appl. Mech. Engrg. 195 (2006) 3224-3246.}

\bibitem{11}{J. Oden, I. Babu\v{s}ka, C. Baumann, A discontinuous hp finite element method for diffusion problems, J. Comput. Phys. 146 (1987) 491-519.}

\bibitem{[7]W.H.Reed}{W.H. Reed, T.R. Hill, Triangular mesh methods for the neutron transport equation, Tech. Report LA-UR-73-479, Los Alamos Scientific Laboratory, 1973.}

\bibitem{8}{B. Rivi\`{e}re, S. Shaw, M.F. Wheeler, J. R. Whiteman, Discontinuous Galerkin finite element methods for linear elasticity and quasistatic linear viscoelasticity, Numer. Math. 95 (2003) 347-376.}

\bibitem{12} {B. Rivi\`{e}re, M.F. Wheeler, V. Girault, Improved energy estimates for interior penalty, constrained and discontinuous Galerkin methods for elliptic problems I, Comput. Geosci. 3 (1999) 337-360.}

\bibitem{10}{C. Schwab, P and hp Finite Element Methods, Numerical Mathematics and Scientific Computation. Oxford University Press, 1998.}









\end{thebibliography}
\end{document}